\newtheorem{theorem}{Theorem}[section]
\newtheorem{lemma}[theorem]{Lemma}           
\newtheorem{prop}[theorem]{Proposition}
\theoremstyle{definition}
\newtheorem{definition}[theorem]{Definition}
\theoremstyle{remark}
\numberwithin{equation}{section}
\subjclass[2020]{Primary 47A75, Secondary 47A40}
\keywords{Discrete Schr\"{o}dinger operator, Eigenvalue, Rellich type theorem}
\title[Absence of EVs in continuous spectra]
{A remark on  the absence of eigenvalues in continuous spectra for discrete Schr\"{o}dinger operators on periodic lattices}
\author[K. Ando]{Kazunori Ando}
\address[K. Ando]{Graduate School of Science and Engineering,
Ehime University, Bunkyo-cho 3, Matsuyama, Ehime, 790-8577, Japan}
\email{ando.kazunori.dx@ehime-u.ac.jp}
\author[H. Isozaki]{Hiroshi Isozaki}
\address[H. Isozaki]{Graduate School of Pure and Applied Sciences, Professor Emeritus, University of Tsukuba, Tennoudai 1-1-1, Tsukuba, Ibaraki, 305-8571, Japan}
\email{isozakihiroshimath@gmail.com}
\author[H. Morioka]{Hisashi Morioka}
\address[H. Morioka]{Graduate School of Science and Engineering,
Ehime University, Bunkyo-cho 3, Matsuyama, Ehime, 790-8577, Japan}
\email{morioka.hisashi.ya@ehime-u.ac.jp}
\date{\today}
\begin{document}
\baselineskip 14pt
\maketitle

\begin{abstract} 
We prove  a Rellich-Vekua type theorem for
Schr\"{o}dinger operators with exponentially decreasing potentials on a class of lattices including square, triangular, hexagonal lattices and their ladders. 
We also discuss the unique continuation theorem and the non-existence of eigenvalues embedded in the continuous spectrum.
\end{abstract}

%%%%%%%%%%%%%%%%%%%%%%%%%%%%%%%%%%%%%%%%%%%%
%
%Section 1
%
%%%%%%%%%%%%%%%%%%%%%%%%%%%%%%%%%%%%%%%%%%%%
\section{Introduction}

The Rellich-Vekua type uniqueness theorem for the Helmholtz equation asserts an optimal decay rate at infinity for their non-trivial solutions  (\cite{Re}, \cite{Vek}).
Namely, suppose that $u\in H^2 _{loc} ({\bf R}^d)$ satisfies the equation 
\begin{equation}
-\Delta u=\lambda u \quad \quad \text{for} \quad  |x|>R_0
\label{Helmholtz1}
\end{equation}
for constants $\lambda , R_0 >0$, and the condition 
\begin{equation}
u(x)= o(|x|^{-(d-1)/2} ) \quad  \text{as} \quad |x| \to \infty .
\label{decayrateinfnity}
\end{equation}
Then there exists a constant $R_1 > R_0 $ such that $u(x)=0$ for $|x| > R_1 $. 
The decay rate (\ref{decayrateinfnity}) is sharp in the sense that there exists a non-trivial solution $u$ of (\ref{Helmholtz1})  for $x \neq 0$ which behaves like $O(r^{-(d-1)/2})$ as $r \to \infty$.  This theorem plays a crucial role in the study of continuous spectrum of elliptic self-adjoint differential operators. Restricting to the case of 2nd order operators, the Rellich-Vekua theorem implies the non-existence of eigenvalues embedded in the continuous spectrum of Schr\"{o}dinger operators (\cite{Ka}, \cite{Ro}, \cite{Ag}), guarantees the  limiting absorption principle for the resolvent (\cite{Ei}, \cite{AgHo}), and is also used in the inverse problem (see e.g. \cite{Iso20}).

A variety of techniques has been developed for the proof of Rellich-Vekua theorem  in the case of differential operators, such as  integration by parts machinery, differential inequality technique, Carleman estimates. Among them,  it is worth recalling that the first successful method used  Fourier analysis and the theory of functions of several complex variables,  reducing the issue to the problem of division in the momentum space. Namely, by passing to the Fourier transform, a linear PDE $P(D)u=f$ with constant coefficients and compactly supported $f$ is transformed to an algebraic equation $P(\xi)\widehat{u} (\xi )=\widehat{f} (\xi )$.
If $P(\xi )$ divides $\widehat{f} (\xi )$, the solution $u$ is compactly supported by virtue of  the Paley-Wiener theorem (\cite{Tre}, \cite{Lit1}, \cite{Lit2}, \cite{Ho}, \cite{Mur}). 
The absence of embedded eigenvalues for Schr\"{o}dinger operators with exponentially decaying potentials  follows from a similar argument, using the unique continuation property (\cite{RaTa}).

The above mentioned analytic method is also useful for discrete operators. In \cite{IsMo}, the Rellich-Vekua type theorem was proven for  Schr\"{o}dinger operators on the square lattice for the case where the potential is finitely supported, using  basic facts from  several complex variables and  algebraic geometry, especially Hilbert's Nullstellensatz. The idea of this proof is essentially given in the argument of \cite{ShVa}. This result was extended to super-exponentially decaying potentials by \cite{Ves} using the Paley-Wiener theorem. One should also take notice of the recent progress brought by \cite{Li}, \cite{Li2} for more general setting.

The Rellich-Vekua type uniqueness theorem is extended to locally perturbed lattice Hamiltonians  by \cite{AIM}, whose argument   highly relies on a representation of the characteristic polynomial of the discrete Laplacian on square-type lattices and triangular-type lattices.
Here the characteristic polynomial of the discrete Laplacian is a trigonometric polynomial on the flat torus ${\bf T}^d := {\bf R}^d /(2\pi {\bf Z})^d $ defined by its Fourier transform.
The result in \cite{AIM} covers a class of lattices including the square, the triangular, the hexagonal lattice and so on. For the Kagome lattice and the subdivison of the square lattice, there are examples of Hamiltonians with compactly supported potentials having embedded eigenvalues.
Applications to the inverse scattering problem for discrete Schr\"{o}dinger operators on lattices are given in \cite{IsMo2}, \cite{AIM2}, where the Rellich-Vekua type uniqueness theorem is used  to show the equivalence of the scattering matrix and the Dirichlet-to-Neumann map on a bounded domain. As we have restricted ourselves to compactly supported perturbations in \cite{AIM}, all of these forward and inverse spectral results were discussed for lattice Schr\"{o}dinger operators perturbed on a finite set.

The aim of this paper is to extend the results in \cite{AIM} for exponentially decreasing potentials. Following the idea in \cite{Ves}, we derive a growth order of eigenfunctions 
for discrete Schr\"{o}dinger operators. By the Paley-Wiener theorem, this estimate implies that super exponentially decreasing solution must vanish at infinity, which leads us to the Rellich-Vekua type theorem, Theorem \ref{Rellichtypetheorem}, the main purpose of this paper. Another barrier for the non-existence of embedded eigenvalues is  the  unique continuation property, which is  considerably different from the case of differential operators. The argument in \cite{AIM} leans heavily on the shape of the lattice. In this paper, we use the results in 
\cite{BILL1}, \cite{BILL2}, changing the geometric nature of the assumption in \cite{AIM} to the graph theoretical one. 
%This scheme covers a class of perturbed lattices broader than that of \cite{AIM}.

The plan of this paper is as follows.
In \S \ref{SecGrowthproperty}, we consider a general graph, and introduce an increasing height function to derive a growth property of solutions to the equation $(\widehat{ \Delta}_{\Gamma} + \widehat{ V} (v))\widehat{ u} (v) = 0$ (Lemma \ref{S2_lem_estimatefromabove}). It then implies that the super exponentially decaying solutions vanish identically near infinity (Lemma \ref{Lemmahatuv=0}). In \S \ref{SecLattice}, we consider a class of lattice Schr\"{o}dinger operators, and in \S \ref{SecRellichtypetheorem}, prove the Rellich type theorem. In \S \ref{ssection_ucp}, we discuss a unique continuation property and a counter example. 

\section{Growth property of eigenfunctions}
\label{SecGrowthproperty}
In this section, we consider a graph $\Gamma =  \{\mathcal V, \mathcal E\}$ with  vertex set $\mathcal V$ and edge set $\mathcal E$. 
For two vertices $v, w \in \mathcal V$, $v \sim w$ means that $v$ and $w$ are end points of an edge in $\mathcal E$. We assume that our graph is simple. Namely, there is at most one edge between any pair of two vertices and there is no loop which is an edge connecting a vertex to itself. 
For a pair of two vertices $v,w\in \mathcal V$, a path from $v$ to $w$ is a sequence $\{ v_j \} _{j=0}^N$ with a positive integer $N$ such that $v_0=v$, $v_N =w$, and $v_{j} \sim v_{j+1}$ for $j=0, 1,\ldots,N-1$.
We assume that our graph is connected, i.e.  for any pair of two vertices $v,w\in \mathcal V$, there exists a path from $v$ to $w$. For a vertex $v$,  ${\rm deg}\,v = \sharp\{w \in \mathcal V\, ; \, w \sim v \}$ is the degree of $v$.
Let $\widehat{ \Delta}_{\Gamma}$ be the Laplacian defined by
\begin{equation}
\big(\widehat{\Delta}_{\Gamma}\widehat{ u} \big)(v) = \frac{1}{\mu_v}
\sum_{w \sim v, w \in \mathcal V} g_{vw}\widehat{ u} (w),
\label{Laplacian}
\end{equation}
where $\mu_v$ and $g_{vw}$ are positive, and $g_{vw}=g_{wv} $. 
We assume that  for some constant $C > 0$
\begin{equation}
C^{-1} \leq \mu_v,  g_{vw}, {\rm deg}\, v \leq C, \quad \forall v, w \in \mathcal V.
\label{muvgvwdegvbound}
\end{equation}
For $v_0 \in \mathcal V$, we put
\begin{equation}
N_{v_0} = \{ v \in \mathcal V\, ; \, v \sim v_0\}.
\label{DefineNv0}
\end{equation}
Note that we use $\mu_v = \mathrm{deg} \, v $ and $g_{vw} =1$ in most part of this paper.

\begin{definition}
\label{Defheightfunction}
Let $\mathcal V$ be the vertex set of the graph $\Gamma = \{\mathcal V, \mathcal E\}$, and $\Omega$ a subset of $\mathcal V$. A function $h : \Omega \to {\bf Z}$ is said to be an increasing height function if it has the following two properties: \\
\noindent
(1) There exists $k_0 \geq 1$ such that 
\begin{equation}
|h(v) - h(w)| \leq k_0 \quad {\it if} \quad v, w \in \Omega, \quad v \sim w.
\label{hv-hwleqk0}
\end{equation}
(2) For any  $v \in \Omega$, there exists $v_0 \in \Omega$ such that $v \sim v_0$ and $N_{v_0} \subset \Omega$, moreover 
\begin{equation}
h(v) + 1 \leq  h(w), \quad \forall w \in N_{v_0}\setminus\{v\}.
\label{hv+1leqhw}
\end{equation}
\end{definition}

Suppose that we fix an increasing height function $h$.
For $v \in \Omega$ and
 $N_{v_0}$ in (\ref{hv+1leqhw}) with $v_0 \sim v$, we put 
\begin{equation}
D_h(v) = N_{v_0} \setminus\{v\}.
\end{equation}
%Note that $v_0$, hence $D_h(v)$,  is possibly nonunique for every $v$. 
%However, if we fix a suitable increasing height function on $\Gamma_0$, we also fix the set $D_h (v)$ so that $h(w)>h(v)$ for $w\in D_h (v)$.
Letting
\begin{equation}
D_0 = \sup_{v \in \Omega}\,\sharp 
D_h(v),
\label{DefineD0}
\end{equation}
 we have $D_0 \leq \mathrm{deg} \, v - 1$.

For $v \in \Omega$, we put
\begin{equation}
\mathbb D_{h,n}(v) = 
\left\{
\begin{split}
& D_h(v), \quad n = 1, \\
& \mathop \cup_{w'\in \mathbb D_{h,n-1}(v)}D_h(w'), \quad n \geq 2,
\end{split}
\right.
\end{equation}
\begin{equation}
\mathbb{D}_h(v) = {\mathop \cup_{n\geq 1}}\mathbb D_{h,n}(v).
\label{Dh(v)define}
\end{equation}

We regard $\mathbb D_h(v)$ from a slightly different view point. Define $D_h(v) \prec D_h(w)$  by
\begin{equation}
D_h(v) \prec D_h(w) 
\Longleftrightarrow w \in D_h(v),
\end{equation}
 and $D_h(v) \prec\prec D_h(w)$ by
\begin{equation}
D_h(v) \prec\prec D_h(w)
\Longleftrightarrow 
\left\{
\begin{array}{l}
 {\rm there} \; {\rm exist} \;  n \geq 1 \;
 {\rm and} \;  w_0, w_1, \dots,w_n \in \Omega \; {\rm such}\; {\rm that}  \\
w_0 = v, w_n = w \; {\rm and} \\
D_h(w_0) \prec D_h(w_1) \prec \cdots \prec D_h(w_n). 
\end{array}
\right.
\label{DhvprecprecDhw}
\end{equation}
Then, $\mathbb D_{h}(v)$ is the set of $w$ such that there exists a chain of $D_h(w_j)$, $1 \leq j \leq n$,  specified in (\ref{DhvprecprecDhw}). 
In view of the definition of Laplacian (\ref{Laplacian}), $\mathbb D_h(v)$ maybe called the {\it domain of  dependence} of $v$. 
%We can then consider a chain of domains of direct dependence. 

For $v, w \in \Omega$, their variation 
${\rm Var}_h(v,w)$ is defined by
\begin{equation}
{\rm Var}_h(v,w) = h(w) - h(v).
\end{equation}
Then, if $w \in D_h(v)$, we have
$1 \leq {\rm Var}_h(v,w) \leq 2k_0$. Therefore, for 
$w' \in \mathbb D_{h,n}(v)$, we have
$n \leq {\rm Var}_h(v, w') \leq 2k_0n$.  Hence
\begin{equation}
h(v) + n \leq h(w') \leq h(v) + 2k_0n, \quad w' \in \mathbb D_{h,n}(v), \quad n \geq 1.
\label{hv+nleqhwleqhv+2n}
\end{equation}

%To fix the idea,  in taking the union of the right-hand side of (\ref{Dh(v)define}), we pick up all possible choices of $D_h(w)$ for $w \in \Omega$.  
%Put for $n = 1, 2,  \dots$
%$$
%{\mathbb D}_{h,n}(v) = \{w \in {\mathbb D}_h(v)\, ; \,h(v) + n \leq h(w) \leq 
%h(v) + 2k_0n\}.
%$$

\begin{lemma} 
\label{S2_lem_estimatefromabove}
Given a subset $\Omega \subset \mathcal V$, assume that there exists an increasing height function $h : \Omega \to {\bf Z}$. 
Let  $\widehat{ V} (v)$ be a $\bf C$-valued bounded function on $\Omega$. Then, there exists a constant $C _0 > 0$ which depends only on $C$   in (\ref{muvgvwdegvbound}) and $\sup_{v \in \Omega}  |\widehat{V} (v)|$ such that 
\begin{equation}
|\widehat{ u} (v)| \leq \,  \big(C_0D_0\big)^n \sup_{w \in {\mathbb D}_{h,n}(v)}|\widehat{ u}(w)|, \quad \forall n \geq 1, \quad \forall v \in \Omega
\label{hatuvleqCnsupDhn}
\end{equation}
holds for any
$\widehat{ u}$ satisfying $(\widehat{ \Delta}_{\Gamma} + \widehat{ V}(v))\widehat{u} (v) = 0$ on $\Omega$, where $D_0$ is defined in (\ref{DefineD0}).
\end{lemma}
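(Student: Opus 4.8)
The plan is to establish (\ref{hatuvleqCnsupDhn}) by induction on $n$, the only analytic ingredient being the equation $(\widehat\Delta_\Gamma+\widehat V)\widehat u=0$ evaluated at the auxiliary vertices supplied by Definition \ref{Defheightfunction}(2). For the base case $n=1$ I fix $v\in\Omega$ and choose $v_0\in\Omega$ with $v\sim v_0$, $N_{v_0}\subset\Omega$ and $D_h(v)=N_{v_0}\setminus\{v\}$ as in (\ref{hv+1leqhw}). Evaluating the equation at $v_0$ and isolating $\widehat u(v)$ — which is legitimate since $g_{v_0v}\ge C^{-1}>0$ by (\ref{muvgvwdegvbound}) — gives
\[
\widehat u(v)=-\frac{1}{g_{v_0v}}\Big(\mu_{v_0}\widehat V(v_0)\,\widehat u(v_0)+\sum_{w\in D_h(v)}g_{v_0w}\,\widehat u(w)\Big),
\]
so that, using (\ref{muvgvwdegvbound}), $|\widehat V(v_0)|\le M:=\sup_\Omega|\widehat V|$ and $\sharp D_h(v)\le D_0$,
\[
|\widehat u(v)|\le C^2M\,|\widehat u(v_0)|+C^2D_0\sup_{w\in D_h(v)}|\widehat u(w)|.
\]

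The only non-mechanical point is the term $\widehat u(v_0)$: the auxiliary vertex $v_0$ is merely a neighbour of $v$ and a priori lies neither in $D_h(v)$ nor in any later set $\mathbb D_{h,n}(v)$, so it must be folded back into the right-hand side. I would do this by applying the same construction recursively to $v_0$ — which again carries an auxiliary vertex by Definition \ref{Defheightfunction}(2) — and by exploiting that along any such chain the height is pushed up by (\ref{hv+1leqhw}) while $h$ is ${\bf Z}$-valued with uniformly bounded jumps (\ref{hv-hwleqk0}); after a number of substitutions controlled by $k_0$ the remaining terms are all located in the domain-of-dependence sets $\mathbb D_{h,m}(v)$, and one arrives at
\[
|\widehat u(v)|\le C_0D_0\sup_{w\in\mathbb D_{h,1}(v)}|\widehat u(w)|,
\]
with $C_0$ depending only on $C$ in (\ref{muvgvwdegvbound}) and on $M$. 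This combinatorial bookkeeping — making precise how $D_h$, $\mathbb D_{h,n}$ and the two clauses of the height function interlock so that nothing escapes the $\mathbb D_{h,m}(v)$'s — is the step I expect to be the real obstacle; it is also precisely where the potential enters, which is why $C_0$ must be allowed to depend on $\sup_\Omega|\widehat V|$.

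Granting the case $n=1$, the inductive step is routine: assuming (\ref{hatuvleqCnsupDhn}) for $n$, apply the $n=1$ estimate to each $w\in\mathbb D_{h,n}(v)$; since $\mathbb D_{h,n+1}(v)=\bigcup_{w\in\mathbb D_{h,n}(v)}D_h(w)$ by (\ref{Dh(v)define}), this replaces $\sup_{w\in\mathbb D_{h,n}(v)}|\widehat u(w)|$ by $C_0D_0\sup_{w'\in\mathbb D_{h,n+1}(v)}|\widehat u(w')|$, and multiplying the constants yields $(C_0D_0)^{n+1}$. Although they play no role in the proof itself, the inequalities (\ref{hv+nleqhwleqhv+2n}) ensure that the sets $\mathbb D_{h,n}(v)$ recede to infinity as $n\to\infty$, which is what makes (\ref{hatuvleqCnsupDhn}) effective for the subsequent vanishing result (Lemma \ref{Lemmahatuv=0}).
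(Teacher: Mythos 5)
Your strategy is the same as the paper's: evaluate the equation at the auxiliary vertex $v_0$ supplied by Definition \ref{Defheightfunction}(2), solve for $\widehat{u}(v)$ using $g_{v_0v}\ge C^{-1}$, and iterate via $\mathbb D_{h,n+1}(v)=\cup_{w\in\mathbb D_{h,n}(v)}D_h(w)$; your inductive step is correct and matches the paper's. But the base case is not established. You rightly isolate the term $C^2M|\widehat u(v_0)|$, and the recursive scheme you propose for absorbing it does not close: each substitution trades $|\widehat u(v_0^{(k)})|$ for $C^2M|\widehat u(v_0^{(k+1)})|$ plus controlled terms, so after $k$ steps a factor $(C^2M)^k|\widehat u(v_0^{(k)})|$ remains, and since $M=\sup_\Omega|\widehat V|$ is not assumed small this does not disappear. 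Nor does the chain terminate ``after a number of substitutions controlled by $k_0$'': condition (\ref{hv+1leqhw}) bounds from below the heights of the \emph{other} neighbours of each auxiliary vertex, not the height of the next auxiliary vertex itself, which is merely adjacent to the previous one and may even have smaller height; so nothing forces the $v_0^{(k)}$ into any $\mathbb D_{h,m}(v)$. As written, your argument does not prove the case $n=1$.

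The intended resolution is much simpler and is what the paper's square and hexagonal examples actually display: $v_0$ is to be counted as an element of $D_h(v)$, i.e.\ one reads $D_h(v)=\{v_0\}\cup(N_{v_0}\setminus\{v\})$ together with $h(v_0)\ge h(v)+1$ (which holds for the $v_0$ chosen in every example). Then your own first display gives at once
\[
|\widehat u(v)|\le C^2\max\Bigl(1,\sup_{\Omega}|\widehat V|\Bigr) D_0\sup_{w\in D_h(v)}|\widehat u(w)|,
\]
which is (\ref{hatuvleqCnsupDhn}) for $n=1$ with $C_0=C^2\max(1,\sup_\Omega|\widehat V|)$ --- and this is also the only reason $C_0$ must depend on $\sup_\Omega|\widehat V|$ at all. (The paper's own inequality (\ref{S2_eq_hatuvleqCnsupDhn00}) sums only over $N_{v_0}\setminus\{v\}$ and so shares the defect you stumbled on; but with $D_h(v)$ literally equal to $N_{v_0}\setminus\{v\}$ the $n=1$ bound is in fact false: on ${\bf Z}^2$ with $h(x)=x_1$, $v=0$, $v_0={\bf e}_1$, one can build a global solution with $\widehat u\equiv0$ on $N_{{\bf e}_1}\setminus\{0\}$ and $\widehat V({\bf e}_1)\widehat u({\bf e}_1)\ne0$, which forces $\widehat u(0)\ne0$ while the right-hand side vanishes. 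So the difficulty you identified is real, but it calls for reading $v_0$ into $D_h(v)$, not for a recursion.)
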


\begin{proof}
Take $v \in \Omega$ arbitrarily.
Using the equation $(\widehat{ \Delta}_{\Gamma} + \widehat{ V})\widehat{ u} = 0$, we have
\begin{equation}
|\widehat{ u}(v)| \leq C_0\sum_{w \in N_{v_0}\setminus\{v\}}|\widehat{ u}(w)|
\leq C_0 D_0\sup_{w \in D_{h}(v)}|\widehat{ u}(w)|, 
\label{S2_eq_hatuvleqCnsupDhn00}
\end{equation}
which proves (\ref{hatuvleqCnsupDhn}) for $n=1$. 
Assume (\ref{hatuvleqCnsupDhn}) for $n$, and take $w \in {\mathbb D}_{h,n}(v)$. 
Applying (\ref{S2_eq_hatuvleqCnsupDhn00}) (with $v, w$ replaced by $w, w'$)  to $\widehat{u}(w)$ in the right-hand side of (\ref{hatuvleqCnsupDhn}), we have
$$
|\widehat{ u}(v)| \leq \big(C_0 D_0\big)^{n+1} \, \sup_{w\in {D}_{h,n}(v), w' \in D_{h}(w)}
|\widehat{ u}(w')|.
$$
Noting that
$w \in D_{h,n}(v)$ and $w' \in D_{h}(w)$ imply
$w' \in \mathbb D_{h,n+1}(v)$,
we have proven (\ref{hatuvleqCnsupDhn}) for $n+1$. 
\end{proof}

We make a new assumption:

\medskip
\noindent
{\bf (A-1)}
{\it An infinite graph $\Gamma = \{ \mathcal{V},\mathcal{E} \} $ is given by perturbing a finite part of a periodic lattice $\Gamma_0$.
Namely, we assume that the following conditions hold true.
\begin{itemize}
\item
$\mathcal{V} = \mathcal{V} _{int} \cup \mathcal{V}_{ext} $ and $\mathcal{V}_{int} \cap \mathcal{V}_{ext} =\emptyset $ with $\sharp \mathcal{V}_{int} < \infty $.
\item
There exists an infinite, induced subgraph $\Gamma _{ext} = \{ \mathcal{V}_{ext},\mathcal{E}_{ext} \} $ of $\Gamma_0 $ without isolated vertices.
\end{itemize}
}
%{\it Except for a finite subset, $\Gamma = \{\mathcal V, \mathcal E\}$ is realized as a subset in ${\bf R}^d$. Namely, there exists a finite subset $\mathcal V_{int} \subset \mathcal V$, a constant $R > 0$ and an injection: $\mathcal V_{ext} := \mathcal V \setminus \mathcal V_{int} \to \{x \in {\bf R}^d \, ; \, |x| > R\}$ keeping the graph structure.}

\medskip
By virtue of this assumption, the induced subgraph $\Gamma_{ext} $ is realized in ${\bf R}^d$.  
For $v \in \mathcal V_{ext}$, the length of $v$, denoted by $|v|$, is defined in terms of the norm of ${\bf R}^d$. 
Thus  there is an injection $\mathcal{V}_{ext} \to \{ x\in {\bf R}^d \ ; \ |x|>R \}$ for a constant $R>0$.
We now assume that

\medskip
\noindent
{\bf (A-2)} {\it There exists a constant $a > 0$ such that for any $v \in \Omega \cap \mathcal V_{ext}$}
\begin{equation}
h(w) \leq a|w|, \quad \forall w \in \mathbb D_h(v) \cap \mathcal V_{ext}.
\end{equation}

%\medskip
%\noindent
%(C-2) {\it There exists a constant $b > 0$ such that for any $v \in \Omega$ }
%\begin{equation}
%\sharp\,  {\mathbb D}_{h,n}(v) \leq e^{bn}, \quad \forall n > 0.
%\end{equation}

\begin{lemma}
\label{Lemmahatuv=0}
Assume (A-1) and (A-2).  Let $\widehat{ u}$ satisfy $(\widehat{ \Delta}_{\Gamma} + \widehat{ V})\widehat{ u} = 0$ on $\Omega \cap \mathcal V_{ext}$. 
Take $v \in \Omega \cap \mathcal V_{ext}$ arbitrarily.  Assume that $\widehat{ u}$ is super exponentially decreasing on $\mathbb D_h(v)\cap\mathcal V_{ext}$, i.e. for any $A > 0$, there exists a constant $C_A > 0$ such that 
$|\widehat{ u}(w)| \leq C_A e^{-A|w|},  \forall w \in \mathbb D_h(v)\cap\mathcal V_{ext}$. 
Then, $\widehat{ u}(v) = 0$. 
\end{lemma}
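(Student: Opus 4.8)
The plan is to combine the pointwise growth estimate from Lemma~\ref{S2_lem_estimatefromabove} with the super-exponential decay hypothesis and let $n \to \infty$. Fix $v \in \Omega \cap \mathcal V_{ext}$. By Lemma~\ref{S2_lem_estimatefromabove}, for every $n \geq 1$ we have
\begin{equation}
|\widehat u(v)| \leq (C_0 D_0)^n \sup_{w \in \mathbb D_{h,n}(v)} |\widehat u(w)|.
\label{growthbd}
\end{equation}
By assumption (A-1), every $w \in \mathbb D_{h,n}(v) \subset \mathbb D_h(v)$ lies in $\mathcal V_{ext}$ — here I would need to check that the height function is only relevant on the exterior, i.e. that $\mathbb D_h(v) \subset \mathcal V_{ext}$ when $v \in \mathcal V_{ext}$, or at least restrict attention to $\mathbb D_h(v) \cap \mathcal V_{ext}$ as in the statement; I will assume $\Omega$ has been chosen so that this holds (this is the role of intersecting with $\mathcal V_{ext}$ throughout). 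For such $w$, combining (A-2) with the lower bound $h(w) \geq h(v) + n$ from \eqref{hv+nleqhwleqhv+2n} gives $a|w| \geq h(w) \geq h(v) + n$, hence $|w| \geq (h(v) + n)/a$, so $|w| \to \infty$ uniformly over $w \in \mathbb D_{h,n}(v)$ as $n \to \infty$.

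Next I would insert the super-exponential decay. Given any $A > 0$, there is $C_A$ with $|\widehat u(w)| \leq C_A e^{-A|w|}$ for all $w \in \mathbb D_h(v) \cap \mathcal V_{ext}$. Feeding the bound $|w| \geq (h(v)+n)/a$ into this and then into \eqref{growthbd} yields
\begin{equation}
|\widehat u(v)| \leq (C_0 D_0)^n \, C_A \, e^{-A(h(v)+n)/a} = C_A e^{-A h(v)/a} \left( C_0 D_0 \, e^{-A/a} \right)^n.
\label{finalbd}
\end{equation}
Now choose $A$ large enough that $C_0 D_0 \, e^{-A/a} < 1$; this is possible since $C_0$ depends only on the structural constants and $\sup |\widehat V|$, $D_0$ depends only on the graph, and $a$ is fixed, so none of these depend on $A$. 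With such an $A$ fixed, the right-hand side of \eqref{finalbd} tends to $0$ as $n \to \infty$, while the left-hand side is independent of $n$. Therefore $|\widehat u(v)| = 0$, i.e. $\widehat u(v) = 0$.

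The argument is short, and I do not expect a serious obstacle; the one point requiring care is the bookkeeping of which vertices the estimates apply to. Specifically, Lemma~\ref{S2_lem_estimatefromabove} is stated for $\widehat u$ satisfying the equation on all of $\Omega$, whereas here the equation is only assumed on $\Omega \cap \mathcal V_{ext}$; I would either re-run the proof of that lemma with $\Omega$ replaced by $\Omega \cap \mathcal V_{ext}$ (which works verbatim, since the recursion in \eqref{S2_eq_hatuvleqCnsupDhn00} only ever steps from $v$ to vertices in $D_h(v) \subset \mathbb D_h(v)$, staying inside $\mathcal V_{ext}$ by the finiteness of $\mathcal V_{int}$ and the structure in (A-1)), or note that the relevant $\mathbb D_{h,n}(v)$ are contained in $\mathcal V_{ext}$ so the finitely many interior vertices are never touched. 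The only genuinely quantitative input is the inequality $C_0 D_0 e^{-A/a} < 1$, and the crucial structural fact making it usable is that $C_0$, $D_0$, and $a$ are all fixed \emph{before} $A$ is chosen.
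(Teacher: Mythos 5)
Your proof is correct and follows essentially the same route as the paper: apply Lemma~\ref{S2_lem_estimatefromabove}, use (A-2) together with $h(w)\geq h(v)+n$ to bound $|w|$ from below, insert the super-exponential decay, and choose $A$ so large that $C_0D_0e^{-A/a}<1$ before letting $n\to\infty$. The bookkeeping about restricting to $\Omega\cap\mathcal V_{ext}$ that you flag is handled implicitly (and somewhat tacitly) in the paper as well, so there is no substantive difference.
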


\begin{proof}
By (A-2), for $w \in \mathbb D_{h,n}(v)$, we have $a|w| \geq h(w) \geq h(v) +n$.
Then, the right-hand side of (\ref{hatuvleqCnsupDhn}) is dominated from above by 
$$
\big(C_0D_0\big)^n C_Ae^{-A(h(v) + n)/a} \leq 
C_Ae^{-Ah(v)/a} \Big(C_0D_0e^{ - A/a}\Big)^n.
$$
We choose $A$  large enough so that $C_0D_0e^{ - A/a} < 1$. Letting $n \to \infty$, we obtain the lemma.
\end{proof}

%%%%%%%%%%%%%%%%%%%%%%%%%%%%%%%%%%%%%%%%%%%%
\section{Lattice Schr\"{o}dinger operators}
\label{SecLattice}

\subsection{Periodic lattice}
A periodic lattice is a graph $\Gamma_0 = \{\mathcal V_0,\mathcal E_0\}$ in ${\bf R}^d$, $d\geq 2$, where 
$\mathcal V_0$ is the set of vertices given by 
\begin{equation}
\mathcal V_0 = \cup_{j=1}^s \left( p_j + \mathcal L_0\right),
\label{mathcalV0=pj+L0}
\end{equation}
$\mathcal L_0$ being a ${\bf Z}$ module in ${\bf R}^d$:
\begin{equation}
\mathcal L_0=\{ {\bf v} (n) \ ; \ n\in {\bf Z}^d \} , \quad {\bf v} (n)= \sum_{j=1}^d n_j {\bf v}_j , \quad n = (n_1,\dots,n_d)
\label{DefinemathcalL0}
\end{equation}
with a basis $ {\bf v}_1, \ldots ,{\bf v}_d $ of ${\bf R}^d$, 
and $p_1 ,\ldots ,p_s$ are points in ${\bf R}^d $ satisfying 
\begin{equation}
p_j -p_k \not\in \mathcal L_0 \quad \text{if} \quad j\not= k ,
\label{Conditionpj-pkneq0}
\end{equation}
and $\mathcal E_0 \subset \mathcal V_0\times \mathcal V_0$ is the set of edges. 
By (\ref{mathcalV0=pj+L0}) and (\ref{Conditionpj-pkneq0}), there exists a bijection: $((\pi(\cdot),n(\cdot)) : \mathcal V_0 \to \{ 1,\ldots ,s\} \times {\bf Z}^d$ such that 
\begin{equation}
v=p_{\pi (v)} + {\bf v} (n(v)), \quad \forall v \in \mathcal V_0.
\label{S3_eq_pi_n}
\end{equation}
We assume that the edges  satisfy the condition
$$
(v,w)\in \mathcal E_0 \Rightarrow (m\cdot v,m\cdot w)\in \mathcal E_0, \quad 
\forall m \in {\bf Z}^d.
$$
Here $m\cdot v$ denotes the action of the group ${\bf Z}^d$ on $\mathcal V_0$ defined by
$$
{\bf Z}^d \times \mathcal V_0 \ni (m,v) \to
m\cdot v= p_{\pi (v)} +{\bf v} (n(v)+m).
$$
The Laplacian on $\Gamma_0$, denoted  by $\widehat{  \Delta}_{\Gamma_0}$, is defined by (\ref{Laplacian})
with $\mu_v = {\rm deg}\,v$, $g_{vw} = 1$. Assuming that $\Gamma_0$ is locally finite, the condition (\ref{muvgvwdegvbound}) is satisfied.

Noting that the correspondence $v \leftrightarrow (j,n) = (\pi(v), n(v))$ is bijective,  we can represent any ${\bf C}$-valued function $\hat u(v)$ on $\mathcal V_0$ as
\begin{equation}
\widehat{ u}(v) = \widehat{ u}_j(n) \quad \text{if} \quad
v = p_j + {\bf v}(n).
\end{equation}
Therefore, we represent $\widehat{ u}(v)$ as a ${\bf C}^s$-valued function $\widehat{ u}(n)= (\widehat{ u}_1(n), \dots,\widehat{ u}_s(n))^{\mathsf{T}}$ for simplicity, i.e. $\widehat{ u}_j(n)$ is defined on $p_j + \mathcal L_0$. 
Due to  the periodicity of $\Gamma_0$, $\mathrm{deg} (p_j +{\bf v} (n))$ depends only on $j$. Hence we put $\mathrm{deg}_j = \mathrm{deg} (p_j +{\bf v} (n))$.
The  Laplacian is then  written in this expression as
\begin{gather*}
(\widehat{ \Delta}_{\Gamma_0}\widehat{ u})(n)=\big( (\widehat{ \Delta}_{\Gamma_0}\widehat{ u})_1 (n),\ldots,(\widehat{ \Delta}_{\Gamma_0}\widehat{ u})_s (n)\big) ^{\mathsf{T}} , \quad n\in {\bf Z}^d , \\
(\widehat{\Delta}_{\Gamma_0}\widehat{ u})_j (n)= \frac{1}{\mathrm{deg}_j } \sum _{w\in N_{p_j  +{\bf v} (n)}}\widehat{ u}_{\pi (w)} (n(w)),
\end{gather*}
where $((\pi(\cdot),n(\cdot))$ is defined by (\ref{S3_eq_pi_n}).
A multiplication operator $\widehat{ V}(v)$ on $\ell^2(\mathcal V_0)$ is represented as  the operator of multiplication by the diagonal matrix $\widehat{V}(n)=\mathrm{diag} [\widehat{V}_1 (n),\ldots, \widehat{V}_s (n)] $ where $\widehat{V}_j (n)=\widehat{V} (p_j +{\bf v} (n)) $.
We now introduce a discrete Schr\"{o}dinger operator $\widehat{ H}=\widehat{ H}_0 +\widehat{ V}$ on $\mathcal V_0$ by
\begin{equation}
(\widehat{ H} \widehat{ u})(v)=(\widehat{ H}_0 \widehat{ u})(v)+\widehat{ V}(v)\widehat{ u}(v), \quad \widehat{ H}_0 =-\widehat{ \Delta}_{\Gamma_0}, \quad v\in \mathcal V_0.
\label{S2_eq_Schrodinger}
\end{equation}

We impose the following assumption.

\medskip
\noindent
{\bf (A-3)} \textit{  There exist constants $ \alpha , C >0$ such that 
$$
\max_{j\in \{ 1,\ldots,s \} } |\widehat{ V}_j (n)| \leq C e^{ -\alpha|n|} , \quad n\in {\bf Z}^d .
$$ 
}

\medskip

In the following, we assume that $\widehat{ V}$ is real-valued.
Then, $\widehat{ H}$ and $\widehat{ H}_0$ are bounded self-adjoint operators on the Hilbert space $\ell^2 (\mathcal V_0)= \ell^2 ({\bf Z}^d ; {\bf C}^s )$ equipped with the inner product 
$$
(\widehat{ f},\widehat{ g} ) _{\ell^2 (\mathcal V_0) } = \sum_{j=1}^s \deg_j \sum _{n \in {\bf Z}^d}  \widehat{ f}_j (n) \overline{\widehat{ g}_j (n)},
$$
and  the associated $ \| \cdot \| _{\ell^2 (\mathcal V_0)} $ norm.

\subsection{Examples} 
We introduce basic examples of lattices and Laplacians as well as 
increasing height functions.
In the following, we define $|x| = \sqrt{x_1^2 + \cdots + x_d^2} $ for $x=(x_1,\ldots,x_d) \in {\bf R}^d$. 
%\HOX{In the following, $|x|$ must be the Euclidean metric. 20240211. morioka}
%\begin{equation}
%|x| = |x_1| + \cdots + |x_d|, \quad x = (x_1,\dots,x_d) \in {\bf R}^d.
%\end{equation}
% 

\subsubsection{Square lattice} 
\label{SubsubSquarelattice}
(cf. Figure \ref{fig_squarelattice} for $d=2$)
\begin{figure}[t]
\centering
\includegraphics[width=5cm, bb=0 0 450 453]{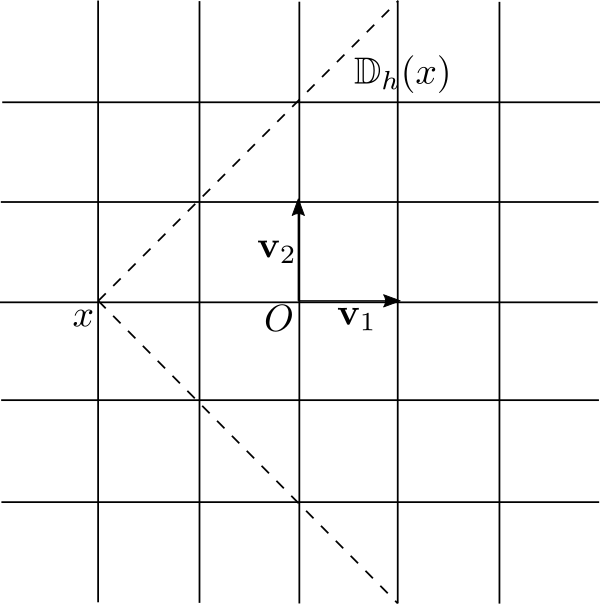}
\caption{Two-dimensional square lattice. $p_1 = 0$, ${\bf v}_1 = (1,0)$, ${\bf v}_2 = (0,1)$. $\mathbb{D}_h (x)$ is the domain of dependence for $(\widehat{H}-\lambda )\widehat{u}=0$ on the square lattice.}
\label{fig_squarelattice}
\end{figure}
Let ${\bf v}_1 = {\bf e}_1, \ldots, {\bf v}_d = {\bf e}_d$ be the standard basis of ${\bf R}^d$ and define $\mathcal L_0$ by (\ref{DefinemathcalL0}).  Take $s=1$, $p_1= 0 \in {\bf R}^d$ and let $\mathcal V_0 = {\bf Z}^d$, $\mathcal E_0= \{(v,w) \in {\bf Z}^d\times{\bf Z}^d\, ; \, |v - w| = 1\}$.
Let $N_v  = \{ w\in {\bf Z}^d \ ; \ |w-v|=1 \} $ for  $v\in \mathcal V_0$. 
The discrete Laplacian is given by 
$$
( \widehat{\Delta} _{\Gamma_0}\widehat{ u} )(n)= \frac{1}{2d} \sum _{j=1}^d \left(\widehat{  u} (n+{\bf e}_j ) + \widehat{ u} (n-{\bf e}_j ) \right) , \quad n \in {\bf Z}^d .
$$
Taking $\Omega = \mathcal V_0$ and using the Cartesian coordinates $x = (x_1,\dots,x_d)$ in ${\bf R}^d$, we define an increasing  height function $h(x)$ by 
$$
h(x) =x_1, \quad x \in \Omega.
$$
Then, in (\ref{hv-hwleqk0}), $k_0 = 1$ and for $x \in \Omega$
\begin{gather*}
D_{h}(x) = \{x + {\bf e}_1, x + 2{\bf e}_1, x+ {\bf e}_1 \pm {\bf e}_2, \ldots, x + {\bf e}_1 \pm {\bf e}_n\}, \\
\mathbb D_{h}(x)= \left\{ y\in {\bf Z}^d \ ; \ \sum _{j=2}^d | y_j - x_j | \leq y_1 -x_1 \right\}. 
\end{gather*}
Similarly, one can adopt $h(x) = x_j$, $2 \leq j \leq n$.

\subsubsection{Triangular lattice} 
\label{subsubtriangular}
(cf. Figure \ref{fig_trianglelattice} for $d=2$)
\begin{figure}[t]
\centering
\includegraphics[width=7cm, bb=0 0 638 455]{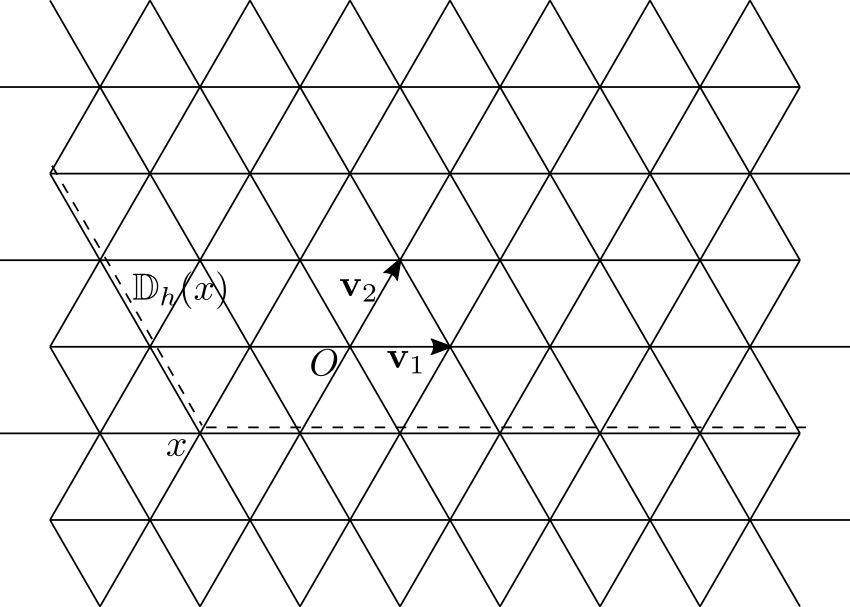}
\caption{Trianglar lattice. $p_1 = 0$, ${\bf v}_1 = (1,0)$, ${\bf v}_2 = (1/2,\sqrt{3}/2)$. $\mathbb{D}_h (x)$ is the domain of dependence for $(\widehat{H}-\lambda )\widehat{u}=0$ on the triangle lattice.}
\label{fig_trianglelattice}
\end{figure}
Let ${\bf v}_1 = (1,0)$, ${\bf v}_2 = (1/2,\sqrt{3}/2)$, $s=1$, $p_1 = 0 \in {\bf R}^2$, and  
$\mathcal V_0 = \{n_1{\bf v}_1 + n_2{\bf v}_2\, ; \, 
(n_1,n_2) \in {\bf Z}^2\}, \, \mathcal E_0 = \{(v,w) \in \mathcal V_0 \times \mathcal V_0\, ; \, |v-w| = 1\}.$ Let $N_v = \{w \in \mathcal V_0\, ; \, |v-w| = 1\}$. It is convenient to identify ${\bf R}^2$ with ${\bf C}$ and put  
$\omega = e ^{\pi i/3} = (1 + \sqrt3 i)/2$. Then $\mathcal V_0 = {\bf Z} + 
{\bf Z}\omega$.  Denoting the points of  $\mathcal V_0$ by $x = n_1 + n_2\omega$, the discrete Laplacian is defined by
\begin{equation}
\begin{split}
\big(\widehat{\Delta}_{\Gamma_0} \widehat{ u} \big)(x) &= \frac{1}{6}
\sum_{\ell=0}^5 \widehat{ u} (x + \omega^{\ell}).
\end{split}
\nonumber
\end{equation}
Taking $\Omega = \mathcal V_0$, 
 we define an increasing  height function by
$$
h(n_1 + n_2\omega) = n_1 + 2n_2.
$$
It satisfies
\begin{equation}
\left\{
\begin{split}
%& h(x) = h(x+\omega^2 ),\\ % \quad \textit{if} \quad x = x_1 + ix_2, \quad x_2 = - {x_1}/{\sqrt3}, \\
&h(x + \omega\big) = h(x) + 2, \\
&h(x + \omega^0) =h(x + \omega^2) =  h(x) + 1, \\
& h(x + \omega^4) = h(x) - 2, \\
& h(x + \omega^3) = h(x + \omega^5) = h(x) - 1.
\end{split}
\right.
\nonumber
\end{equation}
We then have $k_0 = 2$. 
For $x \in \mathcal V_0$, we take $v_0 = x_0 = x + \omega$ in (\ref{hv+1leqhw}). Then, 
$$
D_h(x) = \{x + \omega + \omega^{\ell}\, ; \, \ell =0, 1, 2, 3, 5\},
$$
$$
\mathbb D_h(x) = \{y \in \mathcal V_0\, ; \, 
y_2 - x_2 \geq - \sqrt3(y_1-x_1), \ y_2 \geq x_2
\}.
$$
By the rotation of angle $\pi/3$, we can obtain other increasing height functions.

%\begin{figure}[t]
%\centering
%\includegraphics[width=7cm]{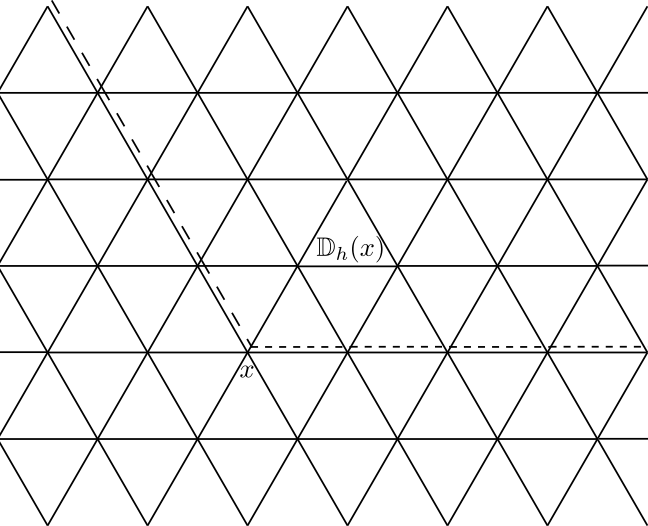}
%\caption{The domain of dependence $\mathbb{D}_h (x)$ for $(H-\lambda )u=0$ on the triangle lattice. }
%\label{S2_fig_trianglecone}
%\end{figure}

\subsubsection{Hexagonal lattice.} (cf. Figure \ref{fig_hexagonallattice})
\begin{figure}[t]
\centering
\includegraphics[width=7cm, bb=0 0 375 196]{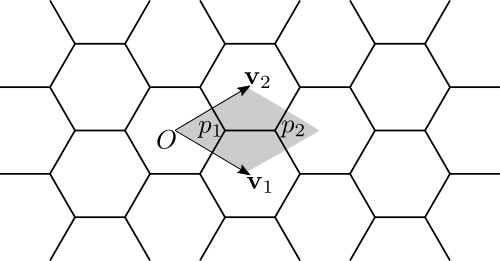}
\caption{Hexagonal lattice on ${\bf R}^2$. $p_1 = (1,0)$, $p_2 = (2,0)$. ${\bf v}_1 = (3/2,-\sqrt{3}/2)$, ${\bf v}_2 = (3/2,\sqrt{3}/2)$.}
\label{fig_hexagonallattice}
\end{figure}
\begin{figure}[t]
\centering
\includegraphics[width=7cm, bb=0 0 376 293]{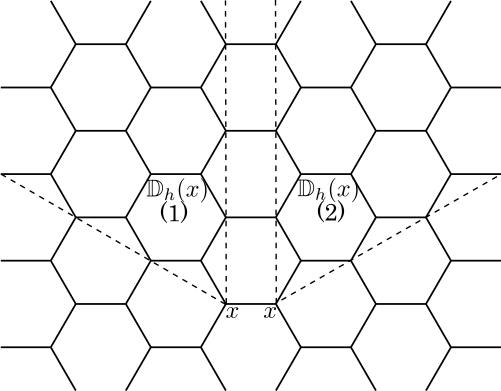}
\caption{The domain of dependence for $(\widehat{H}-\lambda )\widehat{u}=0$ on the hexagonal lattice. 
The domain $ \mathbb{D}_h (x)$ with (1) is the case where $x$ is the left-end point of a horizontal edge. The domain $ \mathbb{D}_h (x)$ with (2) is the case where $x$ is the right-end point of a horizontal edge. }
\label{S2_fig_hexcone}
\end{figure}
Let ${\bf v}_1 = (3/2,-\sqrt{3}/2)$, ${\bf v}_2 = (3/2,\sqrt{3}/2)$, $s=2$, $p_1 = (1,0)$, $p_2 = (2,0)$.
Thus $\mathcal V_0 = \mathcal V_1 \cup \mathcal V_2$ where $\mathcal V_j = p_j +\mathcal L_0$ for $j=1,2$. The edge set $\mathcal E_0$ is the set of pairs $(v,w)$ such that $v \in \mathcal V_i$, $w \in \mathcal V_j$, $i \neq j$, $|v - w| = 1$. 
For $v_i \in \mathcal V_i$, we put 
$$
N_{v_i} = \{ w\in \mathcal V_j \ ; \ |w-v_i | =1, \, j\not= i \}  .
$$
The discrete Laplacian is given by 
$$
(\widehat{ \Delta}_{\Gamma_0}\widehat{ u})(n)= \frac{1}{3} \begin{pmatrix}
 \widehat{ u}_2 (n) + \widehat{ u}_2 (n_1 -1,n_2) + \widehat{ u}_2 (n_1 , n_2 -1) \\ \widehat{ u}_1 (n ) + \widehat{ u}_1 (n_1 +1 ,n_2 ) + \widehat{ u}_1 (n_1 ,n_2 +1) 
\end{pmatrix} , \quad n\in {\bf Z}^2 .
$$

 As in \ref{subsubtriangular}, we identify ${\bf R}^2$ with ${\bf C}$ and use $\omega = (1 + \sqrt3 i)/2$. Note that  $\mathcal V_0 \subset {\bf Z} + {\bf Z}\omega$, however $ \mathcal V_0 \neq {\bf Z} + {\bf Z}\omega$. 
 In fact, $\mathcal V_0$ is the set of $p + q(1 + \omega)$, where $p, q \in {\bf Z}$ and $p \equiv 1$ or $2$ mod $3$. 
 Letting $\Omega = \mathcal V_0$, we define for $n_1 + n_2\omega \in \mathcal V_0$ 
\begin{equation}
h(n_1 + n_2\omega) = n_2.
\end{equation}
The  set of neighboring points of $x = x_1 + ix_2 \in \mathcal V_0$ is either
$$
N_x = \{x + \omega^2, x + \omega^4, x + 1\} \quad \text{for} \quad x\in \mathcal{V}_1 , 
$$
or
$$
N_x = \{x + \omega, x + \omega^3 , x + \omega^5\} \quad \text{for} \quad x\in \mathcal{V}_2 . 
$$ 
In the former case, we have
$$
h(x + \omega^4) < h(x + 1) = h(x) < h(x + \omega^2),
$$
$$
D_h(x) = \{x + \omega^2, x + \omega^2 + \omega, x + \omega^2 -1\}.
$$
and in the latter case
$$
h(x + \omega^5) <  h(x -1) = h(x) < h(x + \omega), 
$$
$$
D_h(x) = \{x + \omega, x + \omega + \omega^2, x + \omega + 1\}.
$$
Then, $h$ is an increasing height function with $k_0 = 1$. 
If  $x$ is the left-end point of a horizontal edge (see Figure \ref {S2_fig_hexcone}), we take $v_0 = x_0 = x + \omega^2$ in (\ref{hv+1leqhw}) and
$$
\mathbb D_h(x) = \{y \in \mathcal V_0\, ; \ y_1 - x_1 \leq 0, \ 
y_2 - x_2 \geq - (y_1-x_1)/\sqrt3, \  y \neq x
\}.
$$
If $x$ is the right-end point of the horizontal edge, we take 
 $x_0 = x + \omega$ and 
$$
\mathbb D_h(x) = \{y \in \mathcal V_0\, ; \, y_1 - x_1 \geq 0, 
\,  y_2 - x_2 \geq (y_1 - x_1)/\sqrt3, \ y \neq x\}.
$$

\subsubsection{Kagome lattice.} (cf. Figure \ref{fig_kagomelattice})
\begin{figure}[t]
\centering
\includegraphics[width=6cm, bb=0 0 709 548]{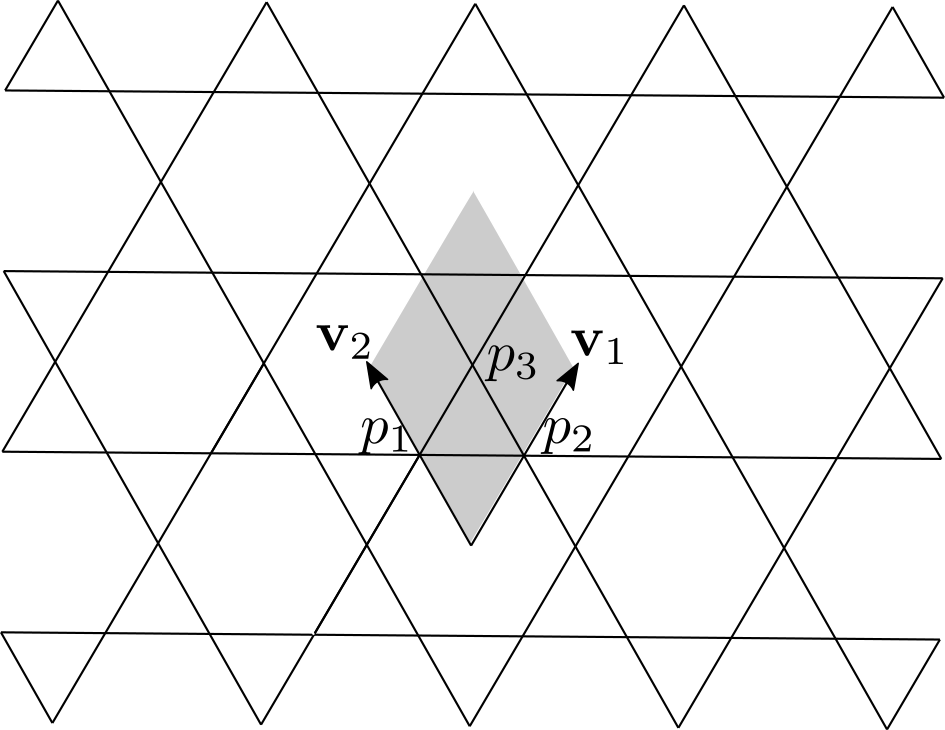}
\caption{Kagome lattice on ${\bf R}^2$. $p_1 = (0,0)$, $p_2 = (1/2,0)$, $p_3 = (1/4,\sqrt{3}/4) $. ${\bf v}_1 = (1/2,\sqrt{3} /2)$, ${\bf v}_2 = (-1/2,\sqrt{3}/2)$.
}
\label{fig_kagomelattice}
\end{figure}
Let ${\bf v}_1 = (1/2,\sqrt{3} /2)$, ${\bf v}_2 = (-1/2,\sqrt{3}/2)$, $s=3$, $p_1 = (0,0)$, $p_2 = (1/2 ,0) $, and $p_3 = (1/4,\sqrt{3}/4 )$.
We put $\mathcal V_0= \mathcal V_1 \cup \mathcal V_2 \cup \mathcal V_3 $, where $\mathcal V_j = p_j + \mathcal L_0$, $j=1,2,3 $.
For $v_j \in \mathcal V_j$, $j=1,2,3$, we put
\begin{gather*}
N_{v_j} = \{ w \in \mathcal V \ ; \ |w-v_j|=1/2 \}.
\end{gather*}
The discrete Laplacian is given by 
$$
(\widehat{ \Delta}_{\Gamma_0}\widehat{  u})(n) = \frac{1}{4} 
\begin{pmatrix}
\widehat{ u}_2 (n) + \widehat{ u}_2 (n_1 -1,n_2 +1) + \widehat{ u}_3 (n )+ \widehat{ u}_3 (n_1 -1 ,n_2 ) \\
\widehat{ u}_1 (n ) + \widehat{ u}_1 (n_1 +1,n_2 -1) + \widehat{ u}_3 (n) + \widehat{ u}_3 (n_1 ,n_2 -1) \\
\widehat{ u}_1 (n ) + \widehat{ u}_1 (n_1 +1 ,n_2 ) +\widehat{  u}_2 (n )+\widehat{ u}_2 (n_1 ,n_2 +1)
\end{pmatrix}
$$
for $n \in {\bf Z}^2$.
We cannot construct increasing height functions for the Kagome lattice. This problem is closely related with the existence of compactly supported eigenvectors, which we will discuss in \ref{SubsubSCounterexample}.

\subsubsection{Ladder.} (cf. Figure \ref{fig_ladder} for the case $d=2$)
\begin{figure}[t]
\centering
\includegraphics[width=6cm, bb=0 0 323 315]{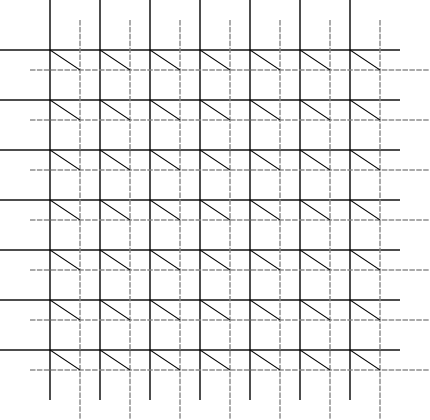}
\caption{Ladder of the square lattice ${\bf Z}^2$ in ${\bf R}^3 $. $p_1 = (0,0,0)$, $p_2 = (0,0,1)$. ${\bf v}_1 = (1,0,0)$, ${\bf v}_2 = (0,1,0)$.}
\label{fig_ladder}
\end{figure}
The ladder of the square lattice ${\bf Z}^d$ is realized in ${\bf R}^{d+1}$.
Let $\mathcal L_0= \{ (n_1,\ldots,n_d,0) \ ; \ n_1,\ldots ,n_d \in {\bf Z} \} $.
Letting $p_1 = (0,\ldots,0,0)$ and $p_2 =(0,\ldots,0,1) $, we put $\mathcal V=\mathcal V_1 \cup \mathcal V_2$ where $\mathcal V_j = p_j + \mathcal L_0$, $j=1,2 $.
For $v_j \in \mathcal V_j $ we put
$$
N_{v_j} = \{ w\in \mathcal V \ ; \ |w-v_j | =1 \} , \quad j=1,2.
$$
The discrete Laplacian on the ladder is given by 
$$
(\widehat{ \Delta}_{\Gamma_0}\widehat{ u} )(n)= \frac{1}{2d+1}
\begin{pmatrix}
\widehat{ u}_2 (n)+ \sum_{j=1}^d (\widehat{ u}_1 (n+ {\bf e}_j) + \widehat{ u}_1 (n-{\bf e}_d)) \\ \widehat{ u}_1 (n)+ \sum_{j=1}^d (\widehat{ u}_2 (n+ {\bf e}_j) +\widehat{ u}_2 (n-{\bf e}_d))
 \end{pmatrix}
 , \quad n \in {\bf Z}^d .
$$

We consider the case $d=2$. 
As the increasing height function, we define  
\begin{equation}
h(v) = x_{1} \quad \textit{for} \ \ x = (x_1,x_2,x_{3}) \in \mathcal V_0.
\end{equation}
Then,  $k_0 = 1$. We have for $x = (x_1,x_2,0)$, 
$$
\mathbb D_h(x) = \{(y_1,y_2,0), (y_1',y_2',1) \in \mathcal V_0\, ; \, 
y_1 - x_1 \geq |y_2 - x_2|, y_1' - (x_1+1) \geq |y_2'-x_2|
\},
$$
and for $x = (x_1,x_2,1)$,
$$
\mathbb D_h(x) = \{(y_1,y_2,0), (y_1',y_2',1) \in \mathcal V_0\, ; \, 
y_1 - (x_1+1) \geq |y_2 - x_2|, y_1' - x_1 \geq |y_2'-x_2|
\}.
$$

%%%%%%%%%%%%%%%%%%%%%%%
\subsection{Momentum representation and Fermi surface}
Let ${\bf T}^d = {\bf R}^d / (2\pi {\bf Z})^d$ be the flat torus.
The Fourier transformation $\mathcal{U}_{\Gamma_0}$ on $\Gamma_0$ is defined by 
$$
{f}_j (x) := (\mathcal{U}_{\Gamma_0} \hat f )_j (x)= (2\pi )^{-d/2} \sqrt{\mathrm{deg}_j} \sum_{n \in {\bf Z}^d} e^{in\cdot x} \widehat{ f}_j (n), \quad x \in {\bf T}^d ,
$$
for $\widehat{ f}(n)=[\widehat{ f}_1 (n), \ldots,\widehat{ f}_s (n)]^{\mathsf{T}}$, $n \in {\bf Z}^d $.
Letting $L^2 ({\bf T}^d ; {\bf C}^s )$ be equipped with the inner product 
$$
(f,g ) _{L^2 ({\bf T}^d ; {\bf C}^s )} = \sum_{j=1}^s \int _{{\bf T}^d} {f}_j (x) \overline{{g}_j (x)} dx,
$$
$ \mathcal{U}_{\Gamma_0} : \ell^2 (\mathcal V_0) \to L^2 ({\bf T}^d ; {\bf C}^s )$ is a unitary operator.
For the standard basis $ {\bf e}_1 , \ldots , {\bf e}_d $ of ${\bf R}^d$, and the conjugation by $\mathcal{U}_{\Gamma_0}$, the shift $\widehat{ u} (\cdot) \to \widehat{ u}(\cdot +{\bf e}_k )$ on ${\bf Z}^d$ is transformed to the multiplication by $e^{-i x_k} $ on ${\bf T}^d$.
Then we have 
\begin{equation}
\widehat{H}_0  = - \widehat{\Delta}_{\Gamma_0} = \mathcal{U}_{\Gamma_0}^{-1}H_0 \, \mathcal{U}_{\Gamma_0},
\label{S2_eq_Laplaciantorus}
\end{equation}
where $H_0$ is the operator of multiplication by an $s\times s$ Hermitian matrix $H_0 (x)$ whose entries are trigonometric polynomials (see also Section 2.2 in \cite{AIM}).

Let ${\bf T}^d_{{\bf C}} = {\bf C}^d / (2\pi {\bf Z})^d $ be  the complex flat torus. We put
\begin{gather}
p(x,\lambda )=\det ({H}_0 (x)-\lambda ) , \quad \lambda \in {\bf C} , \label{S2_eq_chpoly} \\
M_{\lambda}  = \{x \in {\bf T}^d \ ; \ p(x,\lambda )=0 \} , \label{S2_eq_fermi} \\
M _{\lambda} ^{\bf C} = \{z \in {\bf T}^d _{{\bf C}} \ ; \ p(z,\lambda )=0 \} , \label{S2_eq_complexfermi} \\
M_{\lambda,reg}^{\bf C} = \{z \in M_{\lambda}^{\bf C} \ ; \ \nabla_{z} \, p (z,\lambda ) \not= 0 \} , \quad \label{S2_eq_regfermi} \\
M_{\lambda,sng}^{\bf C} = \{z \in M_{\lambda}^{\bf C} \ ; \ \nabla_{z}\,  p (z,\lambda ) = 0 \} , \label{S2_eq_singfermi} \\
\widetilde{\mathcal{T}} = \{ \lambda \in \sigma (H_0 ) \ ; \ M_{\lambda,sng}^{\bf C} \cap {\bf T}^d \not= \emptyset \} . \label{S2_eq_threshholds}
\end{gather}
In order to prove the Rellich type uniqueness theorem on $\Gamma_0$, we impose the following assumption on the torus structure of $M_{\lambda}^{\bf C}$.

\medskip
\noindent
{\bf (A-4)} 
\textit{There exists a subset $ \mathcal{T}_1 \subset \sigma (H_0)$ such that for $\lambda \in \sigma (H_0)\setminus \mathcal{T}_1 $ 
\begin{enumerate}
\item $M_{\lambda,sng}^{{\bf C}}$ is at most a discrete set.
\item Let $\Omega_a = \{ z \in {\bf T}^d_{{\bf C}} \ ; \ |\mathrm{Im} \, z|<a \} $.
For an arbitrarily fixed $a>0$, each connected component of $M_{\lambda,reg}^{{\bf C}} \cap \Omega_a $ intersects with ${\bf T}^d$ and the intersection is a $(d-1)$-dimensional real analytic submanifold of ${\bf T}^d$.
\end{enumerate}
}

\medskip
%We say that the  lattice $(L,X,E)$ is \textit{admissible} if (A-4) is satisfied.
Let us check the assertion (1) in the assumption (A-4) for the examples given above.
For the proof, see \cite[Section 3]{AIM}.

\medskip
\noindent
\textit{Square lattice.}
\begin{itemize}
\item $
p(x, \lambda )= - \dfrac{1}{d} \sum _{j=1}^d \cos x_j -\lambda.
$
\item $ \sigma (H_0 )= [-1,1] $. 
\item $\widetilde{\mathcal{T}} = \{ n/d \ ; \ n=-d,-d+2,\ldots,d-2,d \} $.
\item $ \mathcal{T}_1 = \{ -1,1\} $.
\end{itemize}

\medskip
\noindent
\textit{Hexagonal lattice.}
\begin{itemize}
\item $H_0 (x)= -\dfrac{1}{3} \begin{pmatrix} 0 & 1+e^{ix_1} + e^{ix_2} \\  1+e^{-ix_1} + e^{-ix_2} & 0 \end{pmatrix}.$
\item $p(x,\lambda )= \lambda^2 - \dfrac{1}{9} \left( 3+2( \cos x_1 +\cos x_2 + \cos (x_1 - x_2 )) \right)$.
\smallskip
\item $ \sigma (H_0 )= [-1,1] $. 
\item $ \widetilde{\mathcal{T}} = \{ -1,-1/3, 0, 1/3, 1 \} $.
\item $ \mathcal{T}_1 = \{ -1,0,1\} $.
\end{itemize}

\medskip
\noindent
\textit{Kagome lattice.}
\begin{itemize}
\item $
H_0 (x)= -\dfrac{1}{4} \begin{pmatrix} 0 & 1+e^{i (x_1 - x_2 )} & 1+e^{ix_1} \\ 1 +e^{-i(x_1 -x_2 )} & 0 & 1+e^{ix_2}  \\ 1+e^{-ix_1} & 1+e^{-ix_2} & 0    \end{pmatrix} ,
$
\item 
$ p(x,\lambda )=- \left( \lambda - \frac{1}{2}
 \right) \left( \lambda^2 + \frac{\lambda}{2} - \frac{\beta (x) }{8 } \right)
$, \ 
$
\beta (x)= 1+\cos x_1 +\cos x_2 + \cos (x_1 - x_2 )$, 
\item $\sigma (H_0 )=[-1,1/2] $.
\item $ \widetilde{\mathcal{T}} = \{ -1, -1/2, -1/4, 0, 1/2 \}$.
\item $ \mathcal{T}_1 = \{ -1,-1/4,1/2\} $.

\end{itemize}

\medskip
\noindent
\textit{Ladder.}
\begin{itemize}
\item
$
H_0 (x)= -\dfrac{1}{2d+1} \begin{pmatrix} 2 \sum _{j=1}^d \cos x_j & 1 \\ 1 & 2\sum_{j=1}^d \cos x_j \end{pmatrix} ,
$
\item 
$
p(x, \lambda )= p_+ (x, \lambda ) p_- (x,\lambda ) , 
$ \
$
p_{\pm} (x, \lambda )= \lambda + \dfrac{1}{2d+1} \left( 2\sum_{j=1}^d \cos x_j \pm 1  \right),
$
\item $ \sigma ( H_0 )= [-1,1] $.
\item $\widetilde{\mathcal T} = \Big\{\dfrac{-2d+1}{2d+1}, \dfrac{-2d+5}{2d+1},\dots,1
\Big\}\cup\Big\{-1, \dfrac{-2d+3}{2d+1},\dots,\dfrac{2d-1}{2d+1}
\Big\}$
\item $
\mathcal{T}_1 = [-1,(-2d+1)/(2d+1)] \cup [(2d-1)/(2d+1),1] .
$
\item For $   \lambda <(-2d+1)/(2d+1)$, we have $p_+ (\xi ,\lambda ) \not=0 $ for any $\xi \in {\bf T}^d $.

\item  For $ \lambda > (2d-1)/(2d+1)$, we have $p_- (\xi ,\lambda ) \not=0 $ for any $\xi \in {\bf T}^d $.

\end{itemize}

It is not an easy task to prove the assertion (2) in (A-4) for specific lattices. 
In the Appendix, we give a proof for the square lattice and the hexagonal lattice.

\medskip

\textit{Remark.}
The assumption (A-4) is used in the proof of Theorem \ref{S3_thm_Rellich} as well as the Paley-Wiener theorem.
Suppose that we replace the assumption (A-3) by 

\medskip

\noindent
{\bf (A-3)'} \textit{There exists a constant $C>0$ such that 
$$
\max _{j\in \{ 1,\ldots ,s\} } |\widehat{V} (n)| \leq Ce^{ -\alpha |n| }, \quad n\in {\bf Z}^d,
$$
for any $ \alpha >0 $.}

\medskip

\noindent
The simplest case of (A-3)' is the case of $\sharp \mathrm{supp} \, \widehat{V} <\infty $.
Under the assumption (A-3)', the assumption (A-4) can be simplified as 

\medskip

\noindent
{\bf (A-4)'}  \textit{
There exists a subset $ \mathcal{T}_1 \subset \sigma (H_0)$ such that for $\lambda \in \sigma (H_0)\setminus \mathcal{T}_1 $ 
\begin{enumerate}
\item $M_{\lambda,sng}^{{\bf C}}$ is at most a discrete set.
\item Each connected component of $M_{\lambda,reg}^{{\bf C}} $ intersects with ${\bf T}^d$ and the intersection is a $(d-1)$-dimensional real analytic submanifold of ${\bf T}^d$.
\end{enumerate}
}

\medskip

\noindent
The assumption (A-4)' was used in \cite{AIM,AIM2} in which the authors assumed that $\hat{V} (n)= 0$ except for a finite number of $n\in {\bf Z}^d$.
In \cite{Ves}, the Rellich-Vekua type uniqueness theorem was generalized to super-exponentially decreasing potentials.

\section{Rellich type uniqueness theorem}
\label{SecRellichtypetheorem}
\subsection{Analyticity of eigenfunction in momentum space}

The purpose of this section is to prove the following theorem.
\begin{theorem}
\label{Rellichtypetheorem}
Assume (A-1)-(A-4).
Let $ \lambda \in \sigma _{ess} (H ) \setminus \mathcal{T}_1 $.
Suppose that a function $\widehat{ u}$ on $\mathcal{V} _{ext}$ satisfies $\widehat{ H} \widehat{ u} =\lambda \widehat{ u}$ for $|n| > R_0$ with some constant $R_0$ and
\begin{equation}
\lim _{R\to \infty} \frac{1}{R} \sum_{j=1}^s \sum _{|n| < R} | \widehat{ u}_j (n)|^2 =0.
\label{S3_eq_B*0}
\end{equation}
Then there exists a  constant $R_1 > R_0$ such that $\widehat{ u} (n)=0$ for $|n| >R_1$. 
\label{S3_thm_Rellich}
\end{theorem}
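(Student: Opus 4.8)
The plan is to reduce the statement to the claim that $\widehat u$ is \emph{super-exponentially decreasing} near infinity, after which Lemma~\ref{Lemmahatuv=0}, together with the explicit domains of dependence $\mathbb D_h$ computed in \S\ref{SecLattice}, finishes the argument. Writing the eigenequation $\widehat H\widehat u=\lambda\widehat u$ in the form $(\widehat\Delta_\Gamma+(\widehat V-\lambda))\widehat u=0$ and taking $\Omega$ to be a cofinite subset of $\mathcal V_{ext}$ on which the height functions of \S\ref{SecLattice} restrict to increasing height functions (so that (A-1), (A-2) hold; one checks $h(w)\le a|w|$ directly because $h$ is linear in the coordinates and $\mathbb D_h(w)$ lies in a cone), Lemma~\ref{Lemmahatuv=0} gives $\widehat u(v)=0$ whenever $\widehat u$ is super-exponentially decreasing on $\mathbb D_h(v)\cap\mathcal V_{ext}$. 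For each lattice in \S\ref{SecLattice} the family of available height functions (the coordinate directions $\pm x_j$ for the square lattice and the ladder, and their rotates for the triangular and hexagonal lattices) has the property that for every $v$ with $|v|$ large some member $h$ makes $\mathbb D_h(v)\subset\{\,|n|>c|v|\,\}$ with a fixed $c>0$; hence if $\widehat u$ is super-exponentially decreasing on $\{\,|n|>\rho\,\}$ it vanishes for $|v|>\rho/c=:R_1$.

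So the heart of the proof is to show that $\widehat u(n)$ decays faster than $e^{-A|n|}$ for every $A>0$. Since $\Gamma$ coincides with $\Gamma_0$ outside a finite set, identifying $\mathcal V_{ext}$ with a cofinite subset of $\mathcal V_0\leftrightarrow\{1,\dots,s\}\times{\bf Z}^d$ via (A-1) and extending $\widehat u$ by zero, one obtains a global equation $(\widehat H_0-\lambda)\widehat u=\widehat g$ on ${\bf Z}^d$, where $\widehat g=-\widehat V\widehat u+\widehat r$ and $\widehat r$ has finite support (it collects the sites of $\mathcal V_{int}$, the finitely many edges where $\Gamma\ne\Gamma_0$, and the ball $|n|\le R_0$). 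Passing to the momentum representation with $\mathcal U_{\Gamma_0}$ and multiplying by the adjugate of $H_0(x)-\lambda$, Cramer's rule yields the scalar identity $p(x,\lambda)\,u(x)=\widetilde g(x)$ on ${\bf T}^d$, where $u=\mathcal U_{\Gamma_0}\widehat u$ is a priori only a distribution and $\widetilde g=\mathrm{adj}(H_0(\cdot)-\lambda)\,\mathcal U_{\Gamma_0}\widehat g$.

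The decisive step --- and the one I expect to be the main obstacle --- is to extract from the weak decay (\ref{S3_eq_B*0}), which says precisely that $\widehat u$ lies in the Agmon--H\"{o}rmander space $B^*_0$, that the distribution $u$ has no singular part on the Fermi surface $M_\lambda$; equivalently, that $\widetilde g$ vanishes on $M_\lambda\cap{\bf T}^d$. This is the lattice Rellich--Vekua mechanism of \cite{IsMo}, \cite{AIM}, \cite{Ves}: by (A-4)(1) the set $M_{\lambda,sng}^{\bf C}$ is discrete, so away from it $M_\lambda$ is a smooth real hypersurface by (A-4)(2); the two boundary values $\widetilde g/(p\mp i0)$ differ by a measure on $M_\lambda$ with density a multiple of $\widetilde g/|\nabla p|$, and membership of $u$ in $B^*_0$ forces this difference, hence $\widetilde g$ on $M_\lambda\cap{\bf T}^d$, to vanish. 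The finitely many points of $M_{\lambda,sng}^{\bf C}\cap{\bf T}^d$ (which occur exactly for $\lambda\in\widetilde{\mathcal T}$) are dealt with by a local limiting-absorption argument as in \cite{AIM}, using once more that this set is discrete.

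With $\widetilde g|_{M_\lambda\cap{\bf T}^d}=0$ in hand I would run a bootstrap on the rate of decay. From (\ref{S3_eq_B*0}) and (A-3), for every $\beta<\alpha$ one has $\sum_n e^{2\beta|n|}|\widehat g(n)|^2\le C\sum_{R\ge1}e^{-2(\alpha-\beta)R}\,o(R)<\infty$, so $\mathcal U_{\Gamma_0}\widehat g$ and hence $\widetilde g$ extend holomorphically to the tube $\Omega_\beta=\{z\in{\bf T}^d_{\bf C}\ ;\ |\mathrm{Im}\,z|<\beta\}$. By (A-4)(2) each connected component of $M_{\lambda,reg}^{\bf C}\cap\Omega_\beta$ meets ${\bf T}^d$ in a real-analytic $(d-1)$-dimensional submanifold, so the vanishing of $\widetilde g$ on $M_\lambda\cap{\bf T}^d$ propagates by analytic continuation to $M_{\lambda,reg}^{\bf C}\cap\Omega_\beta$, and then to all of $M_\lambda^{\bf C}\cap\Omega_\beta$ since $M_{\lambda,sng}^{\bf C}$ is lower dimensional. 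Hence $u=\widetilde g/p$ is holomorphic on $\Omega_\beta\setminus M_{\lambda,sng}^{\bf C}$, and since $M_{\lambda,sng}^{\bf C}$ has complex codimension $d\ge2$ it extends holomorphically to all of $\Omega_\beta$ by Hartogs' theorem (the real points of $M_{\lambda,sng}^{\bf C}$ being already handled in the previous step). Consequently $\widehat u(n)=O(e^{-\beta'|n|})$ for every $\beta'<\alpha$. But then $\widehat g=-\widehat V\widehat u+\widehat r$ decays faster than $e^{-\gamma|n|}$ for every $\gamma<2\alpha$, so the same argument gives holomorphy of $u$ on $\Omega_\beta$ for all $\beta<2\alpha$, hence $\widehat u(n)=O(e^{-\beta'|n|})$ for all $\beta'<2\alpha$; iterating, $\widehat g$ decays faster than $e^{-\gamma|n|}$ for all $\gamma<(k+1)\alpha$ and $\widehat u(n)=O(e^{-\beta'|n|})$ for all $\beta'<(k+1)\alpha$. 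As $k$ is arbitrary, $\widehat u$ is super-exponentially decreasing, and the first paragraph completes the proof.
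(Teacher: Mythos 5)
Your proposal is correct and follows essentially the same route as the paper: reduction to the scalar division problem $p(x,\lambda)u=g$ on ${\bf T}^d$, vanishing of $g$ on the real Fermi surface via the condition (\ref{S3_eq_B*0}), analytic continuation along $M_{\lambda,reg}^{\bf C}$ using (A-4)(2), removal of the discrete singular set by Hartogs' theorem, a Paley--Wiener bootstrap to super-exponential decay, and finally Lemma \ref{Lemmahatuv=0}. The only cosmetic differences are that you obtain the vanishing of $g$ on $M_{\lambda}\cap{\bf T}^d$ by a limiting-absorption/Plemelj-type argument where the paper's Lemma \ref{S3_lem_regularity_u} proves local smoothness of $u$ directly, and that you are somewhat more explicit than the paper about the finitely supported correction term and about choosing, for each $v$, a height function whose cone $\mathbb D_h(v)$ avoids the perturbed region.
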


To prove this theorem, we first rewrite the equation $\widehat{H} \widehat{ u}=\lambda \widehat{ u}$ into the form
\begin{equation}
(\widehat{ H}_0 -\lambda )\widehat{ u}=\widehat{ f}, \quad \widehat{ f}=-\widehat{ V}\widehat{ u} .
\label{S3_eq_helmholtz}
\end{equation}
Passing to the Fourier transformation, we have the equation on the torus
$$
(H_0(x) - \lambda ) u(x) = f(x), \quad x \in {\bf T}^d.
$$
Multiplying this equation by the cofactor matrix of ${H}_0 (x)-\lambda $, we have 
\begin{equation}
p(x,\lambda ) u(x) = g(x), \quad x\in {\bf T}^d ,
\label{S3_eq_helmholtz_single}
\end{equation}
for a distribution ${g}$ on ${\bf T}^d$.
Picking up one of the components of $u$ and $g$, and denoting them by $u$ and $g$ again, we consider the equation $p(x,\lambda )u=g$.
In view of (\ref{S3_eq_B*0}) and (A-3), we have 
$$
\max_{j\in \{ 1,\ldots ,s\}} |\widehat{ f}_j (n)| \leq C e^{-\alpha|n|} , \quad n\in {\bf Z}^d ,
$$
 for  constants $C, \alpha >0$.
Note that the components of the cofactor matrix of ${H}_0 (x)-\lambda $ are trigonometric polynomials. 
As in \cite[Theorem 6.1]{Ves}, we use the following Paley-Wiener theorem.

\begin{theorem}
Let $\widehat{ q} \in \ell^2 ({\bf Z}^d ; {\bf C})$ and $ \gamma_0 >0$.
Then $e^{\gamma \langle \cdot \rangle } \widehat{ q} \in \ell^2 ({\bf Z}^d ; {\bf C})$ for all $\gamma \in (0,\gamma_0 )$ if and only if the function $q$ extends to an analytic function in $ \{ \zeta \in {\bf T}^d _{{\bf C}} \ ; \ | \mathrm{Im} \, \zeta |< \gamma_0  \} $.
\label{S3_thm_PW}
\end{theorem}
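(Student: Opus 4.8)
The plan is to establish the two implications of this Paley--Wiener equivalence separately, the only non-formal ingredient being Cauchy's theorem on the complex torus. Throughout I write $q(x) = (2\pi)^{-d/2}\sum_{n\in{\bf Z}^d} e^{in\cdot x}\widehat{q}(n)$ for the Fourier series of $\widehat{q}$, and I use the elementary comparison $|n| \le \langle n\rangle \le |n| + 1$ for $\langle n\rangle = (1+|n|^2)^{1/2}$, which makes weighted $\ell^2$-summability with the weight $e^{\gamma\langle\cdot\rangle}$ interchangeable with that for $e^{\gamma|\cdot|}$ up to a fixed constant.

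For the direction \emph{decay implies analyticity}, I would substitute complex arguments directly into the series, setting $q(\zeta) = (2\pi)^{-d/2}\sum_{n} e^{in\cdot\zeta}\widehat{q}(n)$ for $\zeta\in{\bf T}^d_{{\bf C}}$, each summand being entire in $\zeta$. On a compact subset of the tube $\{\,|\mathrm{Im}\,\zeta|<\gamma_0\,\}$ one can fix $t$ and $\gamma$ with $|\mathrm{Im}\,\zeta|\le t<\gamma<\gamma_0$ there; then $|e^{in\cdot\zeta}| = e^{-n\cdot\mathrm{Im}\,\zeta}\le e^{t|n|}$, and Cauchy--Schwarz gives
\begin{equation}
\sum_{n}\bigl|e^{in\cdot\zeta}\widehat{q}(n)\bigr| \le \sum_{n} e^{(t-\gamma)|n|}\, e^{\gamma|n|}|\widehat{q}(n)| \le \Bigl(\sum_{n} e^{2(t-\gamma)|n|}\Bigr)^{1/2}\bigl\|e^{\gamma|\cdot|}\widehat{q}\bigr\|_{\ell^2},
\nonumber
\end{equation}
which is finite since $t-\gamma<0$. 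Hence the series converges absolutely and locally uniformly on the tube, so by Weierstrass' theorem $q(\zeta)$ is holomorphic there, and it agrees with $q$ when $\zeta\in{\bf T}^d$.

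For the converse, \emph{analyticity implies decay}, the decisive step is a contour shift. For $y\in{\bf R}^d$ with $|y|<\gamma_0$, the map $\zeta\mapsto e^{-in\cdot\zeta}q(\zeta)$ is holomorphic on the tube and $2\pi{\bf Z}^d$-periodic in $\mathrm{Re}\,\zeta$, and I would deform the real torus of integration into the complex domain one coordinate at a time, the lateral contributions cancelling by periodicity, to obtain
\begin{equation}
\widehat{q}(n) = (2\pi)^{-d/2}\int_{{\bf T}^d} e^{-in\cdot(x+iy)}\, q(x+iy)\, dx .
\nonumber
\end{equation}
This yields $|\widehat{q}(n)|\le e^{n\cdot y}(2\pi)^{-d/2}\int_{{\bf T}^d}|q(x+iy)|\,dx$. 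Given $\gamma\in(0,\gamma_0)$ I would pick $t\in(\gamma,\gamma_0)$ and take $y = -t\,n/|n|$ for $n\neq 0$, so that $n\cdot y = -t|n|$ and $|\widehat{q}(n)|\le C_t\, e^{-t|n|}$ with $C_t := \sup_{|y|\le t}(2\pi)^{-d/2}\int_{{\bf T}^d}|q(x+iy)|\,dx<\infty$ (finite because $q$ is bounded on the compact set $\{\,|\mathrm{Im}\,\zeta|\le t\,\}$ of the tube). Then $e^{\gamma\langle n\rangle}|\widehat{q}(n)|\le C_t\, e^{\gamma}\, e^{-(t-\gamma)|n|}$, which is square-summable over ${\bf Z}^d$ since $t-\gamma>0$; as $\gamma\in(0,\gamma_0)$ was arbitrary, this completes the proof.

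The step I expect to require the most care is the contour-shift identity in the second part: rigorously justifying that the $d$-dimensional torus of integration may be translated by $iy$ without changing $\widehat{q}(n)$. This is a routine, one-variable-at-a-time application of Cauchy's theorem in which the periodicity of $q$ forces the boundary terms to cancel; alternatively, one may simply invoke \cite[Theorem 6.1]{Ves} and recover the statement as phrased via the comparison $|n|\le\langle n\rangle\le|n|+1$. Everything else is elementary estimation.
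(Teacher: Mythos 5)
Your proof is correct. Note that the paper itself does not prove this statement at all --- it is quoted verbatim from Vesalainen \cite[Theorem~6.1]{Ves} --- so there is no in-paper argument to compare against; your two-directional argument (absolute, locally uniform convergence of the complexified Fourier series via Cauchy--Schwarz for one implication, and the contour shift $\widehat{q}(n)=(2\pi)^{-d/2}\int_{{\bf T}^d}e^{-in\cdot(x+iy)}q(x+iy)\,dx$ with $y=-t\,n/|n|$ for the other) is the standard route and supplies exactly what the citation leaves implicit. The only points worth making explicit in a final write-up are (i) that the uniform limit of the series agrees with $q$ a.e.\ on ${\bf T}^d$, so it genuinely is an extension, and (ii) the one-variable-at-a-time Cauchy deformation with periodic cancellation of lateral boundary terms, both of which you have already flagged and which are routine.
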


As a consequence, we obtain the analyticity of the function $g$ in (\ref{S3_eq_helmholtz_single}). 
\begin{lemma}
The function $g$ on the right-hand side of (\ref{S3_eq_helmholtz_single}) extends to an analytic function in the subset $\{ \zeta \in {\bf T}^d _{{\bf C}} \ ; \ | \mathrm{Im} \, \zeta | < \alpha  \} $.
\label{S3_lem_analytic_g}
\end{lemma}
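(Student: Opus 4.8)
The plan is to combine the exponential decay of $\widehat{f}$ with the Paley--Wiener theorem (Theorem \ref{S3_thm_PW}) and the fact that the cofactor matrix of $H_0(x)-\lambda$ has trigonometric-polynomial entries. First I would recall that, by \eqref{S3_eq_B*0} together with (A-3), the function $\widehat{f}=-\widehat{V}\widehat{u}$ satisfies $\max_j|\widehat{f}_j(n)|\le Ce^{-\alpha|n|}$; in particular each component $\widehat{f}_j\in\ell^2({\bf Z}^d;{\bf C})$ and $e^{\gamma\langle\cdot\rangle}\widehat{f}_j\in\ell^2$ for every $\gamma\in(0,\alpha)$. By the ``only if'' direction of Theorem \ref{S3_thm_PW}, each $f_j$ (the Fourier transform of $\widehat{f}_j$) therefore extends to an analytic function on the tube $\{\zeta\in{\bf T}^d_{{\bf C}}\ ;\ |\mathrm{Im}\,\zeta|<\alpha\}$.

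Next I would observe that $g$ arises from $f$ by multiplication by the cofactor matrix $\mathrm{cof}(H_0(x)-\lambda)$ and by picking out one component: concretely, $g(x)=\sum_k c_k(x)f_k(x)$ where the $c_k(x)$ are the relevant cofactor entries. Since the entries of $H_0(x)$ are trigonometric polynomials in $x$, so are the $c_k(x)$, and trigonometric polynomials are entire on ${\bf T}^d_{{\bf C}}$. A finite sum of products of functions analytic on the tube $\{|\mathrm{Im}\,\zeta|<\alpha\}$ with entire functions is again analytic on that tube. Hence $g$ extends analytically to $\{\zeta\in{\bf T}^d_{{\bf C}}\ ;\ |\mathrm{Im}\,\zeta|<\alpha\}$, which is exactly the assertion.

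One technical point to address carefully is the passage from the $\ell^2$-type decay estimate on $\widehat{f}$ to the precise hypothesis of Theorem \ref{S3_thm_PW}: the pointwise bound $|\widehat{f}_j(n)|\le Ce^{-\alpha|n|}$ gives $e^{\gamma\langle\cdot\rangle}\widehat{f}_j\in\ell^2$ for all $\gamma<\alpha$ since $e^{\gamma\langle n\rangle}e^{-\alpha|n|}$ is summable in $\ell^2$ when $\gamma<\alpha$ (comparing $\langle n\rangle$ and $|n|$ up to bounded error), but one should note that the borderline $\gamma=\alpha$ is excluded, which is precisely why the conclusion is analyticity on the open tube $|\mathrm{Im}\,\zeta|<\alpha$ rather than its closure. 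I do not expect any genuine obstacle here; the only care needed is bookkeeping — that the cofactor construction and the selection of a single component preserve analyticity on the tube, and that each of the finitely many components $\widehat{f}_j$ separately satisfies the Paley--Wiener hypothesis so that the matrix multiplication can be carried out componentwise before extracting $g$.
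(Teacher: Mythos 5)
Your proposal is correct and follows essentially the same route as the paper: derive the pointwise bound $|\widehat{f}_j(n)|\le Ce^{-\alpha|n|}$ from (A-3) and \eqref{S3_eq_B*0}, apply the Paley--Wiener theorem (Theorem \ref{S3_thm_PW}) to get analyticity of $f$ on the tube $|\mathrm{Im}\,\zeta|<\alpha$, and then use that the cofactor entries of $H_0(x)-\lambda$ are trigonometric polynomials, hence entire. Your extra remark on why only $\gamma<\alpha$ (and hence only the open tube) is obtained is a correct and welcome clarification of a step the paper leaves implicit.
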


We also have the regularity of the solution $u$.
By virtue of the analyticity of ${g} $, the following lemma is proven in the same way as in the proof of \cite[Lemma 4.3]{IsMo} or \cite[Lemma 5.2]{AIM}.
For the sake of the completeness of our argument, we reproduce an outline of the proof.

\begin{lemma}
Let $\lambda$ be as in Theorem \ref{S3_thm_Rellich}.
Then $u \in C^{\infty} ({\bf T}^d \setminus M _{\lambda, sng}^{{\bf C}} )$.
In particular, we have ${g} (x)=0 $ on $M _{\lambda, reg}^{{\bf C}} \cap {\bf T}^d $.
\label{S3_lem_regularity_u}
\end{lemma}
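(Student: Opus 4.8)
\emph{Proof idea.} The plan is to exploit the division form $p(x,\lambda)u(x)=g(x)$ on ${\bf T}^d$ together with the real-analyticity of $g$ provided by Lemma~\ref{S3_lem_analytic_g}. First I would stratify ${\bf T}^d$ according to $M_\lambda=\{p(\cdot,\lambda)=0\}$. On the open set ${\bf T}^d\setminus M_\lambda$ the denominator does not vanish, so $u=g/p$ is there a quotient of real-analytic functions and hence real-analytic; in particular $u\in C^\infty$ away from $M_\lambda$, and nothing further is needed there. It remains to handle points of $M_\lambda$. By assumption (A-4), item (2), every point of $M_{\lambda,reg}^{\bf C}\cap{\bf T}^d$ has a neighbourhood in which $M_\lambda$ is a $(d-1)$-dimensional real-analytic submanifold; and since $p(\cdot,\lambda)$ is real-valued on the real torus (it is the determinant of a Hermitian matrix minus $\lambda$), the condition $\nabla_z p\neq 0$ defining $M_{\lambda,reg}^{\bf C}$ reduces on ${\bf T}^d$ to $\nabla_x p\neq 0$, so $p(\cdot,\lambda)$ vanishes to exactly first order along $M_\lambda$ near such a point.

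\emph{Local model.} Near a fixed $x_0\in M_{\lambda,reg}^{\bf C}\cap{\bf T}^d$ I would introduce real-analytic coordinates $(y,t)\in{\bf R}^{d-1}\times{\bf R}$ centred at $x_0$ with $M_\lambda=\{t=0\}$ and $p(\cdot,\lambda)=e(y,t)\,t$, where $e$ is real-analytic and non-vanishing. Setting $w=e\,u$, the equation becomes the one-dimensional division problem $t\,w=g$. Writing $g(y,t)=g(y,0)+t\,h(y,t)$ with $h$ real-analytic, one gets $t\,(w-h)=g(y,0)$, so the general distributional solution has the form
\begin{equation}
w=h(y,t)+g(y,0)\,\mathrm{p.v.}\tfrac{1}{t}+c(y)\otimes\delta(t)
\nonumber
\end{equation}
for some distribution $c$ in $y$; thus $u$ is real-analytic near $x_0$ precisely when $g(y,0)\equiv 0$ and $c\equiv 0$ there.

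\emph{Using the radiation condition; the main obstacle.} The decisive remaining step is to show that the averaged smallness hypothesis (\ref{S3_eq_B*0}) forces $g(y,0)\equiv 0$ and $c\equiv 0$. I would argue that (\ref{S3_eq_B*0}) places $u$ --- whose Fourier coefficients are, up to the normalisation in $\mathcal U_{\Gamma_0}$, the $\widehat u(n)$ --- in the ``little'' Besov-type space on ${\bf T}^d$, and that neither a principal-value term $g(y,0)\,\mathrm{p.v.}(1/t)$ with $g(y,0)\not\equiv 0$ nor a layer $c(y)\otimes\delta(t)$ with $c\not\equiv 0$ can lie in it: after the analytic change of coordinates and localization by a cut-off with $\chi(x_0)\neq 0$, the Fourier coefficients of such a term do not decay in the $t$-frequency direction while being rapidly decreasing in the $y$-directions, so $\sum_{|n|<R}|\widehat u(n)|^2\geq c R$ for some $c>0$, contradicting (\ref{S3_eq_B*0}). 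This is exactly where the little-$o$ (not merely $O$) in (\ref{S3_eq_B*0}) is essential, and making the sequence-space/distribution dictionary precise is the main technical point; the rest is the standard analytic division argument, carried out in essentially the same way in \cite[Lemma 4.3]{IsMo} and \cite[Lemma 5.2]{AIM}. Once $g(y,0)\equiv 0$ has been established at every $x_0\in M_{\lambda,reg}^{\bf C}\cap{\bf T}^d$, we conclude that $u$ is (real-analytic, hence) $C^\infty$ on $\big({\bf T}^d\setminus M_\lambda\big)\cup\big(M_{\lambda,reg}^{\bf C}\cap{\bf T}^d\big)={\bf T}^d\setminus\big(M_{\lambda,sng}^{\bf C}\cap{\bf T}^d\big)\supset{\bf T}^d\setminus M_{\lambda,sng}^{\bf C}$, and that $g=0$ on $M_{\lambda,reg}^{\bf C}\cap{\bf T}^d$, which is the assertion.
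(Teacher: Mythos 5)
Your outline is correct and is, at heart, the same argument as the paper's, presented on the Fourier-dual side. The paper also localizes near a point of $M_{\lambda,reg}^{\bf C}\cap{\bf T}^d$, flattens the surface via coordinates $y_1=p(x,\lambda)$, and reduces to the one-dimensional division $y_1u_\chi=g_\chi$; but instead of writing the general distributional solution $w=h+g(y,0)\,\mathrm{p.v.}(1/t)+c(y)\otimes\delta(t)$ and excluding the singular terms, it passes to the transform $\widetilde u_\chi$, integrates $\partial_{\eta_1}\widetilde u_\chi=-i\widetilde g_\chi$, and shows the constant of integration (equivalently, your $\mathrm{p.v.}$ and $\delta$ coefficients, whose transforms are $-i\pi\,\mathrm{sgn}\,\eta_1$ and a constant in $\eta_1$) must vanish. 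The one step you defer as ``the main technical point'' --- that a term whose localized Fourier transform fails to decay in the normal frequency direction contradicts (\ref{S3_eq_B*0}) --- is exactly where the substance of the paper's proof lies: it first transfers (\ref{S3_eq_B*0}) to $\lim_{R\to\infty}R^{-1}\int_{|\eta|<R}|\widetilde u_\chi(\eta)|^2\,d\eta=0$ (invariance under cut-off and analytic coordinate change, quoted from \cite[Section 4]{AIM}), and then extracts $\liminf_{\eta_1\to\infty}|\widetilde u_\chi(\eta_1,\eta')|=0$ by averaging over the slab $D_R=\{|\eta'|<\delta R,\ R/3<\eta_1<2R/3\}\subset\{|\eta|<R\}$; this forces the boundary term at $\eta_1=\pm\infty$ to vanish and yields $\widetilde u_\chi(\eta)=i\int_{\eta_1}^{\infty}\widetilde g_\chi(s,\eta')\,ds$, hence rapid decay and $u_\chi\in C^\infty$. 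So your plan is sound, but to make it a proof you would need to supply precisely this slab-averaging (or an equivalent) argument rather than cite it as an obstacle; everything else in your write-up (the reduction $\nabla_zp\neq0\Leftrightarrow\nabla_xp\neq0$ on the real torus, the factorization $p=e\,t$, and the final set-theoretic bookkeeping) matches the paper.
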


Proof. 
Suppose that $u$ and $g$ satisfy the equation (\ref{S3_eq_helmholtz_single}).
We take an arbitrary point $x^{(0)} \in M_{\lambda ,reg}^{{\bf C}} \cap {\bf T}^d $.
Let $U$ be a sufficiently small neighborhood of $x^{(0)} $ in ${\bf T}^d $ such that $ M_{\lambda,sng}^{{\bf C}} \cap U = \emptyset $.
We choose $\chi \in C^{\infty} ({\bf T}^d )$ satisfying $\mathrm{supp} \, \chi \in U$ and $\chi (x^{(0)} )=1 $.
Since we have $\nabla_x  p(x^{(0)},\lambda )\not= 0$, we can take a local coordinates $x\mapsto y=(y_1 ,y')$ for $x\in U$ such that $y_1 = p(x,\lambda ) $.
Let $u_{\chi} =\chi u$ and $g_{\chi} =\chi g $.
Thus we have $y_1 u_{\chi} = g_{\chi} $. 
We denote by $\widetilde{u} _{\chi} $ and $\widetilde{g} _{\chi} $ the Fourier transforms of $u _{\chi} $ and $g_{\chi} $.

As has been given in \cite[Section 4]{AIM}, the condition (\ref{S3_eq_B*0}) is equivalent to 
$$
\lim _{R\to \infty} \frac{1}{R} \sum _{j=1}^s \| {\bf 1} (\sqrt{-\Delta_x} <R)u_j \|^2 _{L^2 ({\bf T}^d )} =0,
$$
where ${\bf 1} (\lambda <a)$ is the characteristic function for the subset $(-\infty,a)$.
It follows from the above equality and (\ref{S3_eq_helmholtz_single}) that 
\begin{equation}
\lim _{R\to \infty} \frac{1}{R} \int _{|\eta|<R} |\widetilde{u}_{\chi} (\eta)|^2 d\eta =0.
\label{S4_eq_Lemproof001}
\end{equation}

The equation $y_1 u_{\chi} = g_{\chi} $ implies 
$$
\frac{\partial \widetilde{u}_{\chi}}{\partial \eta_1} =- i\widetilde{g} _{\chi} .
$$
Integrating this equation, we have
$$
\widetilde{u} _{\chi} (\eta)=- i\int_0^{\eta_1} \widetilde{g}_{\chi} (s,\eta')ds + \widetilde{u}_{\chi} (0,\eta').
$$
Note that $\widetilde{g}_{\chi} (s,\eta') $ is rapidly decreasing as $|s|\to \infty$ since $g_{\chi} \in C^{\infty} (U)$ with a small support.
Then we can see that the limit
\begin{equation}
\lim _{\eta_1 \to \infty} \widetilde{u} _{\chi} (\eta)=- i\int_0^{\infty} \widetilde{g}_{\chi} (s,\eta')ds + \widetilde{u}_{\chi} (0,\eta')
\label{S4_eq_Lemproof002}
\end{equation}
exists.
Actually this limit vanishes due to (\ref{S4_eq_Lemproof001}).
In order to see this, we take a subset 
$$
D_R = \left\{ \eta \in {\bf R}^d \ ; \ |\eta'|<\delta R, \quad \frac{R}{3} <\eta_1 <\frac{2R}{3} \right\} , \quad \delta \in (0,1).
$$
For a sufficiently small $\delta >0$, we see $D_R \subset \{ \eta \in {\bf R}^d \ ; \ |\eta|<R \} $.
Then we obtain as $R\to \infty$
\begin{gather*}
\begin{split}
\frac{1}{R} \int _{D_R} |\widetilde{u}_{\chi} (\eta)|^2 d\eta &= \frac{1}{R} \int _{|\eta'|<\delta R} \int _{R/3}^{2R/3} |\widetilde{u}_{\chi} (\eta_1,\eta') |^2 d\eta_1 d\eta' \\
&\leq \frac{1}{R} \int _{|\eta|<R} |\widetilde{u}_{\chi} (\eta )|^2 d\eta \to 0 ,
\end{split}
\end{gather*}
due to (\ref{S4_eq_Lemproof001}).
This estimate implies $\liminf_{\eta_1 \to \infty} |\widetilde{u}_{\chi} (\eta_1 ,\eta')| =0$.
Then (\ref{S4_eq_Lemproof002}) shows 
$$
\widetilde{u} _{\chi} (\eta )= i\int _{\eta_1} ^{\infty} \widetilde{g} _{\chi} (s,\eta')ds ,
$$
and we see that $\widetilde{u} _{\chi} (\eta)$ is rapidly decreasing when $\eta_1 \to \infty $.
We also see that $\widetilde{u} _{\chi} (\eta)$ is rapidly decreasing when $\eta_1 \to -\infty $ in a similar way.
Obviously, $\widetilde{u} _{\chi} $ decays rapidly as $|\eta'|\to \infty$.
Hence $u_{\chi} \in C^{\infty} (U)$.
It is trivial to show that $u$ is smooth outside $M_{\lambda} $.
Now we have $u \in C^{\infty} ({\bf T}^d \setminus M _{\lambda, sng}^{{\bf C}} )$.
The equation (\ref{S3_eq_helmholtz_single}) implies $g(x)=0$ for all $x\in M_{\lambda ,reg}^{{\bf C}} \cap {\bf T}^d $.
\qed

\medskip

\textit{Proof of Theorem \ref{S3_thm_Rellich}.}
To begin with, we show that $\hat u$ is super-exponentially decreasing at infinity on ${\bf Z}^d$.
In a neighborhood of an arbitrary point in $M_{\lambda ,reg}^{{\bf C}} \cap {\bf T}^d$, we take  local coordinates $(\zeta_1 , \ldots ,\zeta_d ) $ so that $M_{\lambda , reg}^{{\bf C}} $ is represented as the hyper-surface $\{ \zeta \in {\bf C}^d \ ; \  \zeta_d =0 \}$.
Let $ \zeta = \xi +i\eta $ for $\xi , \eta \in {\bf R}^d $. 
We expand ${g} $ into a Taylor series on $ M_{\lambda ,reg}^{{\bf C}} $ as 
$$
{g} | _{M_{\lambda ,reg}^{{\bf C}}} = \sum_{\nu' =(\nu_1 , \ldots , \nu_{d-1})} c_{\nu'} \zeta_1 ^{\nu_1} \cdots \zeta_{d-1}^{\nu_{d-1}} .
$$
By Lemma \ref{S3_lem_regularity_u}, ${g} | _{M_{\lambda,reg}^{{\bf C}}} $ vanishes for $ \eta_1 = \cdots  = \eta_{d-1} =0$.
Then we have $ c_{\nu' } =0$ and hence ${g} (\zeta )=0$ in a neighborhood of $M_{\lambda ,reg}^{{\bf C}} \cap {\bf T}^d $. 
Recalling the assertion 2 in the assumption (A-4), by the analytic continuation, we see that ${g} (\zeta )$ vanishes in $ M_{\lambda ,reg} ^{{\bf C}} \cap \{ \zeta \in {\bf T}^d _{{\bf C}} \ ; \ | \mathrm{Im} \, \zeta | < \alpha \} $.
Thus, the singularities of ${g} (\zeta ) / p(\zeta,\lambda )$ are in $ M_{\lambda ,sng}^{{\bf C}} $.
Then the set of these singularities consists of at most discrete subset in ${\bf T}^d_{{\bf C}} $ with $d\geq 2$ in view of the assertion 1 in the assumption (A-4).
It is well-known that by Hartogs' extension theorem (see e.g. \cite[Corollary 7.3.2]{Kr}) discrete singularities are removable. 
Then ${u} (\zeta )= {g}(\zeta )/p(\zeta ,\lambda )$ is analytic in $\{ \zeta \in {\bf T}^d_{{\bf C}} \ ; \ | \mathrm{Im} \, \zeta | < \alpha \} $. 
This implies $ e^{\gamma \langle \cdot \rangle}\widehat{ u} \in \ell^2 ({\bf Z}^d ; {\bf C} )$ for any $\gamma \in ( 0,\alpha )$ by virtue of Theorem \ref{S3_thm_PW}.

Due to (\ref{S3_eq_helmholtz}) and (\ref{S3_eq_helmholtz_single}), we have $e^{2\gamma \langle \cdot \rangle} \widehat{g} \in \ell^2 ({\bf Z}^d ; {\bf C} )$ for any $ \gamma \in (0, \alpha )$. 
By the above argument, $\hat{u}$ gains the decay of order $\alpha $ i.e. $ e^{2\gamma \langle \cdot \rangle} \widehat{u} \in \ell^2 ({\bf Z}^d ; {\bf C} ) $ for any $\gamma \in (0, \alpha )$.
Repeating this procedure,  we have $ e^{N\gamma \langle \cdot \rangle}\widehat{ u} \in \ell^2 ({\bf Z}^d  ;  {\bf C} )$ for any positive integers $N$ and any $ \gamma \in (0,\alpha )$.
We can then apply Lemma \ref{Lemmahatuv=0} to conclude 
Theorem \ref{S3_thm_Rellich}. 
\qed

%%%%%%%%%%%%%%%%%%%
\section{Unique continuation property} 
\label{ssection_ucp}
\subsection{UCP}
We use the terminology  unique continuation property (UCP for short) for the Schr\"{o}dinger operator $\widehat{ H}$ on a periodic lattice $\Gamma_0 = \{ \mathcal{V}_0 , \mathcal{E}_0 \}$ in the following sense.
\begin{definition}
We say that 
the lattice $\Gamma_0$ satisfies UCP if the following property holds true:
Suppose that a solution $\widehat{ u}$  to the equation $ (\widehat{ H} -\lambda )\widehat{ u} =0$, $\lambda$ being a constant, on $\mathcal V_0$ satisfies $\widehat{ u} (n)=0$ for $|n| >R $ for some $R > 0$.
Then  $\hat u$ vanishes identically on $\mathcal V_0$.  
\label{S2_def_ucp}
\end{definition}
For elliptic PDEs, UCP is often used in the following sense:
Let $P(D)$ be a  differential operator, and consider the PDE $P(D)u=0$ in a domain $\Omega \subset {\bf R}^d$. 
If $u=0$ in an open subset $\Omega' \subset \Omega $, then $u=0$ identically in $\Omega$. 

For the discrete case, UCP in this usual sense does not hold true.
Even though we restrict to the case as in Definition \ref{S2_def_ucp}, the UCP  may fail for some lattices or potentials.
In fact, for the  kagome lattice,  if we choose a suitable potential $\widehat{ V}$, we can construct an eigenfunction of $\widehat{ H}$ whose support consists of a finite number of vertices (see \cite[Section 5.3]{AIM}).

A sufficient condition for UCP was derived in \cite{BILL1,BILL2}. 
Let us recall their arguments.
The key role is played by \textit{extreme points} and \textit{two-points condition} for finite graphs with boundary.
Suppose that $ \widetilde{\Gamma } = \{ \widetilde{\mathcal{V}}, \widetilde{\mathcal{E}} \}$ is a finite graph i.e. $\sharp \widetilde{\mathcal{V}} < \infty $ and $\sharp \widetilde{\mathcal{E}} < \infty $.
We decompose the set of vertices $\widetilde{\mathcal{V}}$  into two subsets $\mathcal V_1 \cup \mathcal V_2$ with $\mathcal V_1\cap  \mathcal V_2=\emptyset $, and call $\mathcal V_1$  the set of interior vertices, denoted by $\widetilde{\mathcal{V}}^o$,  and call $\mathcal V_2$ the set of boundary vertices, denoted by $\partial \widetilde{\mathcal{V}}$. 
Note, at this stage, the decomposition into $\mathcal V_1 \cup \mathcal V_2$ is arbitrary. 
When $v, w \in \widetilde{\mathcal{V}}$ are different end points of a same edge in $\widetilde{\mathcal{E}}$, we write $\{ v,w \} \in \widetilde{\mathcal{E}}$. We also  denote it by $v\stackrel{\widetilde{\mathcal{E}}}\sim w $.
For the sake of simplicity, we assume that $v\not\stackrel{\widetilde{\mathcal{E}}}\sim w $ if $v,w\in \partial \widetilde{\mathcal{V}}$.

The distance $d(v,w)$ on $\{ \widetilde{\mathcal{V}}^o\cup \partial \widetilde{\mathcal{V}}, \widetilde{\mathcal{E}} \} $ between two vertices $v,w\in \widetilde{\mathcal{V}}^o\cup \partial \widetilde{\mathcal{V}}$ is defined by the minimal number of edges in all paths from $v$ to $w$ in $\{ \widetilde{\mathcal{V}}^o\cup \partial \widetilde{\mathcal{V}} , \widetilde{\mathcal{E}} \}$. 
Note that $ d(v,w) \geq 2 $ when $v,w \in \partial \widetilde{ \mathcal{V}}$ under our assumption ($ v\not\stackrel{\widetilde{\mathcal{E}}}\sim w$ if $v,w \in \partial \widetilde{\mathcal{V}}$).

\begin{definition}
Given a subset $S\subset \widetilde{\mathcal{V}}^o$, we say  a vertex $v\in S$ is an \textit{extreme point} of $S$ with respect to $ \partial \widetilde{\mathcal{V}}$ if there exists a vertex $w \in \partial \widetilde{\mathcal{V}}$ such that $v$ is the unique nearest vertex in $S$ from $w$ with respect to the distance $d(\cdot ,\cdot )$ on $\{ \widetilde{\mathcal{V}}^o\cup \partial \widetilde{\mathcal{V}}, \widetilde{\mathcal{E}} \} $.
\label{S2_def_extremepoint}
\end{definition}
\begin{definition}
We say a finite graph $\{ \widetilde{\mathcal{V}}^o \cup \partial \widetilde{\mathcal{V}}, \widetilde{\mathcal{E}} \}$ satisfies the \textit{two-points condition} if the following is true: for any subset $S\subset \widetilde{\mathcal{V}}^o$ with $\sharp S \geq 2$, there exist at least two extreme points of $S$ with respect to $ \partial \widetilde{\mathcal{V}}$.
\label{S2_def_twopoints}
\end{definition}

As in \cite{BILL1,BILL2}, we impose the following condition.

\medskip

\noindent
{\bf (A-5)} For any $z \in \partial \widetilde{\mathcal{V}}$ and  $v,w \in \widetilde{\mathcal{V}}^o$ such that $v\stackrel{\widetilde{\mathcal{E}}}\sim z$ and $w\stackrel{\widetilde{\mathcal{E}}}\sim z$, we have $v\stackrel{\widetilde{\mathcal{E}}}\sim w $.

\medskip

Let us consider the discrete Schr\"{o}dinger operator $ - \widehat{ \Delta }_{\widetilde{\Gamma}} + \widehat{ V} $ on $\widetilde{\mathcal{V}}^o$ with the Neumann boundary condition on $ \partial \widetilde{\mathcal{V}}$, i.e.,
\begin{gather*}
\big((-\widehat{ \Delta}_{\widetilde{\Gamma}}+\widehat{ V} ) \widehat{ u} \big)(v)= \frac{1}{\deg (v)} \sum _{w\in \widetilde{\mathcal{V}}, w\stackrel{\widetilde{\mathcal{E}}}\sim v} \widehat{ u}(w) + \widehat{ V} (v)\widehat{ u}(v), \quad v\in \widetilde{\mathcal{V}}^o, \\ 
( \partial _{\nu}\widehat{ u} )(v)= \frac{1}{\deg (v)} \sum  _{w\in \widetilde{\mathcal{V}}^o, w\stackrel{\widetilde{\mathcal{E}}}\sim v} (\widehat{u} (w)- \widehat{ u} (v))=0 , \quad v\in \partial \widetilde{\mathcal{V}},
\end{gather*}
where $\deg (v)= \sharp \{ w\in \widetilde{\mathcal{V}}^o \cup \partial \widetilde{\mathcal{V}} \ ; \ w\stackrel{\widetilde{\mathcal{E}}}\sim v \} $ for every $v\in \widetilde{\mathcal{V}}^o\cup \partial \widetilde{\mathcal{V}}$.

See \cite[Lemma 2.4]{BILL2} for the following result.
\begin{lemma}
Let $\{ \widetilde{\mathcal{V}}^o \cup \partial \widetilde{\mathcal{V}}, \widetilde{\mathcal{E}} \}$ be a finite and connected graph as above.
Suppose that $\{ \widetilde{\mathcal{V}}^o \cup \partial \widetilde{\mathcal{V}}, \widetilde{\mathcal{E}} \}$ satisfies the two-points condition (Definition \ref{S2_def_twopoints}) and the assumption (A-5). 
Then, there is no non-trivial solution to $(-\widehat{\Delta}_{\widetilde{\Gamma}} + \widehat{ V} -\lambda )\widehat{ u}=0$ in $\widetilde{\mathcal{V}}^o$ satisfying both of the zero Dirichlet and the zero Neumann boundary conditions on $\partial \widetilde{\mathcal{V}}$. 
\label{S2_lem_ucp}
\end{lemma}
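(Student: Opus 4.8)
The plan is to argue by contradiction. Suppose $\widehat{u}$ is a non-trivial solution of $(-\widehat{\Delta}_{\widetilde{\Gamma}} + \widehat{V} - \lambda)\widehat{u} = 0$ on $\widetilde{\mathcal{V}}^o$ with $\widehat{u} = 0$ and $\partial_\nu\widehat{u} = 0$ on $\partial\widetilde{\mathcal{V}}$, and put $S = \{v \in \widetilde{\mathcal{V}}^o \ ; \ \widehat{u}(v) \neq 0\} \neq \emptyset$. If $\sharp S \geq 2$, the two-points condition (Definition \ref{S2_def_twopoints}) provides an extreme point $v$ of $S$ with respect to $\partial\widetilde{\mathcal{V}}$; if $\sharp S = 1$ the single vertex of $S$ is trivially an extreme point, any boundary vertex being a witness. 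So in all cases we fix an extreme point $v \in S$ together with a witness $w \in \partial\widetilde{\mathcal{V}}$, i.e. $v$ is the unique nearest vertex of $S$ from $w$ in the distance $d(\cdot,\cdot)$ on $\{\widetilde{\mathcal{V}}^o \cup \partial\widetilde{\mathcal{V}}, \widetilde{\mathcal{E}}\}$, and set $k = d(w,v) = \min_{s \in S} d(w,s) \geq 1$.

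Next I would pick a shortest path $w = x_0, x_1, \ldots, x_k = v$; minimality gives $d(w,x_j) = j$ for every $j$. The key preliminary claim is that $x_1, \ldots, x_{k-1} \in \widetilde{\mathcal{V}}^o$: if some $x_j$ with $1 \leq j \leq k-1$ were a boundary vertex, then, since in our setting no two boundary vertices are adjacent, its path-neighbours $x_{j-1}$ and $x_{j+1}$ would be interior vertices both adjacent to the boundary vertex $x_j$, so (A-5) would give $x_{j-1} \stackrel{\widetilde{\mathcal{E}}}\sim x_{j+1}$ and hence a $w$--$v$ walk of length $k-1$, contradicting $d(w,v) = k$. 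Consequently, when $k \geq 2$ the penultimate vertex $x_{k-1}$ is an interior vertex, and when $k = 1$ it is the boundary witness $w$ itself.

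The contradiction then comes from testing the equation (or the Neumann condition) at $x_{k-1}$. Because $d(w, x_{k-1}) = k-1 < k = \min_{s\in S}d(w,s)$, we have $x_{k-1} \notin S$, so $\widehat{u}(x_{k-1}) = 0$ (also valid when $x_{k-1} = w$, by the Dirichlet condition). If $k \geq 2$, the difference equation at the interior vertex $x_{k-1}$ reads $\frac{1}{\deg(x_{k-1})}\sum_{y \stackrel{\widetilde{\mathcal{E}}}\sim x_{k-1}}\widehat{u}(y) = (\lambda - \widehat{V}(x_{k-1}))\widehat{u}(x_{k-1}) = 0$; if $k = 1$, the Neumann condition at $w$ reads $\sum_{y \stackrel{\widetilde{\mathcal{E}}}\sim w,\, y \in \widetilde{\mathcal{V}}^o}\widehat{u}(y) = 0$ after using $\widehat{u}(w) = 0$. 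Discarding the boundary neighbours (on which $\widehat{u} = 0$), in both cases we obtain $\sum_{y \stackrel{\widetilde{\mathcal{E}}}\sim x_{k-1},\, y \in \widetilde{\mathcal{V}}^o}\widehat{u}(y) = 0$. Now any interior neighbour $y$ of $x_{k-1}$ satisfies $d(w,y) \leq d(w,x_{k-1}) + 1 = k$, and if $\widehat{u}(y) \neq 0$, i.e. $y \in S$, then also $d(w,y) \geq \min_{s\in S}d(w,s) = k$, so $d(w,y) = k$ and the uniqueness built into the notion of extreme point forces $y = v$. Since $v = x_k$ is a neighbour of $x_{k-1}$, the last sum collapses to $\widehat{u}(v)$, giving $\widehat{u}(v) = 0$ and contradicting $v \in S$. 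This proves the lemma.

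I expect the main obstacle to lie in the second step: verifying, via the assumption (A-5) and the no-adjacent-boundary convention, that a geodesic from the boundary witness $w$ to the extreme point $v$ has only interior vertices strictly between its endpoints, so that at $x_{k-1}$ one may legitimately invoke the difference equation rather than merely a boundary condition, together with the borderline case $k = 1$, where $x_{k-1}$ degenerates to $w$ and the Neumann condition must replace the equation. The case $\sharp S = 1$, not covered by the two-points condition as stated, needs the separate elementary remark that an isolated support vertex is always an extreme point; note also that only one extreme point is actually used here, so the full ``at least two'' strength of Definition \ref{S2_def_twopoints} is not needed for this particular lemma.
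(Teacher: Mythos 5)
Your proof is correct, but it takes a genuinely different route from the paper: the paper does not prove Lemma~\ref{S2_lem_ucp} itself, it cites Lemma~2.4 of Bl\aa sten--Isozaki--Lassas--Lu, where the statement is deduced from a \emph{long-time} unique continuation result for an initial-boundary value problem for a discrete-time wave equation on $\{\widetilde{\mathcal{V}}^o\cup\partial\widetilde{\mathcal{V}},\widetilde{\mathcal{E}}\}$. You bypass that time-dependent machinery entirely with a direct stationary argument: choose an extreme point $v$ of the support $S$ with witness $w\in\partial\widetilde{\mathcal{V}}$, follow a geodesic $w=x_0,\dots,x_k=v$, use (A-5) and the no-adjacent-boundary convention to place $x_1,\dots,x_{k-1}$ in $\widetilde{\mathcal{V}}^o$, and then read off $\widehat{u}(v)=0$ from the equation at $x_{k-1}$ (or from the Neumann condition at $w$ when $k=1$), because the uniqueness built into Definition~\ref{S2_def_extremepoint} forces every neighbour of $x_{k-1}$ lying in $S$ to coincide with $v$. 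I checked the individual steps ($d(w,x_j)=j$ on a geodesic, $x_{k-1}\notin S$, the collapse of the neighbour sum to the single term $\widehat{u}(v)$, and the shortcut argument via (A-5) when an intermediate vertex is a boundary vertex) and they all hold; the borderline cases $k=1$ and $\sharp S=1$ are handled correctly, the latter needing your separate remark since Definition~\ref{S2_def_twopoints} only speaks of $\sharp S\geq 2$ (and tacitly requiring $\partial\widetilde{\mathcal{V}}\neq\emptyset$, which is the situation of (A-6)). What your approach buys is a short, self-contained proof that uses only the \emph{existence} of one extreme point, so neither the full ``at least two'' strength of the two-points condition nor, in fact, assumption (A-5) is essential here --- if $x_{k-1}$ happened to be a boundary vertex, the combined Dirichlet and Neumann conditions would yield the same identity $\sum_{y\sim x_{k-1},\,y\in\widetilde{\mathcal{V}}^o}\widehat{u}(y)=0$ and the argument would close identically. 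What it does not give is the quantitative observability/propagation statement of the cited works, for which the stronger hypotheses are actually needed.
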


This UCP follows from a long-time UCP for an initial-boundary value problem of a discrete-time wave equation on $\{ \widetilde{\mathcal{V}}^o \cup \partial \widetilde{\mathcal{V}}, \widetilde{\mathcal{E}} \}$.
Now we introduce an assumption for UCP on a periodic lattice $\Gamma_0 = \{ \mathcal{V}_0 , \mathcal{E}_0 \} $.

\medskip
\noindent
{\bf (A-6)} For a sufficiently large constant $R > 0$, we put 
$$ 
\Omega _{R} = \cup_{j=1}^s \{ p_j + {\bf v} (x) \in \mathcal V_0 \ ; \ |x| \leq R \} \subset \mathcal V_0 .
$$
We assume that there exists a finite subsgraph $\widetilde{\Gamma}  = \{ \widetilde{\mathcal{V}} , \widetilde{\mathcal{E} } \} $ of $\Gamma_0$ for which the following statements hold true.
\begin{itemize}
\item $\widetilde{\mathcal{V}}$ is split into the interior vertices and the boundary vertices $\widetilde{\mathcal{V}}^o \cup \partial \widetilde{\mathcal{V}}$ with $ \widetilde{\mathcal{V}}^o \cap \partial \widetilde{\mathcal{V}} = \emptyset $. 
\item $\Omega_{R} \subset \widetilde{\mathcal{V}}^o $. 
\item Let $\widetilde{\mathcal{E}} \subset \mathcal{E}_0$ be defined by $\widetilde{\mathcal{E}} = \{ \{ v,w\} \in \mathcal{E}_0 \ ; \ v,w \in \widetilde{\mathcal{V}}^o \} \cup \{ \{ v,w\} \in \mathcal{E}_0 \ ; \ v\in \widetilde{\mathcal{V}}^o,w \in \partial \widetilde{\mathcal{V}} \ \text{or} \ v\in \partial \widetilde{\mathcal{V}},w \in  \widetilde{\mathcal{V}}^o  \} $. 
Then the finite subgraph $\widetilde{\Gamma} = \{ \widetilde{\mathcal{V}}^o \cup \partial \widetilde{\mathcal{V}}, \widetilde{\mathcal{E}} \}$ of $\Gamma_0$ satisfies the two-points condition (Definition \ref{S2_def_twopoints}) and the assumption (A-5).
\end{itemize}

\medskip

The assumption (A-6) guarantees UCP in the sense of Definition \ref{S2_def_ucp} as follows.

\begin{theorem}
If a finite subgraph $\widetilde{\Gamma}$ of a periodic lattice $\Gamma_0$ satisfies the assumption (A-6), UCP in the sense of Definition \ref{S2_def_ucp} holds true.
\label{S2_lem_ucpA5}
\end{theorem}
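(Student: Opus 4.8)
The plan is to reduce UCP on the infinite periodic lattice $\Gamma_0$ to the finite-graph statement of Lemma \ref{S2_lem_ucp}. Suppose $\widehat{u}$ solves $(\widehat{H}-\lambda)\widehat{u}=0$ on $\mathcal V_0$ and vanishes outside a ball of radius $R$, i.e. $\widehat{u}(n)=0$ for $|n|>R$. Enlarging $R$ if necessary, we may assume $R$ is large enough that the finite subgraph $\widetilde{\Gamma}=\{\widetilde{\mathcal V}^o\cup\partial\widetilde{\mathcal V},\widetilde{\mathcal E}\}$ furnished by (A-6) contains $\Omega_R$ in its interior and satisfies the two-points condition together with (A-5). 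The first step is to check that the restriction of $\widehat{u}$ to $\widetilde{\mathcal V}^o$ satisfies the hypotheses of Lemma \ref{S2_lem_ucp}: it solves the Neumann-type eigenequation $(-\widehat{\Delta}_{\widetilde{\Gamma}}+\widehat{V}-\lambda)\widehat{u}=0$ on $\widetilde{\mathcal V}^o$, and it satisfies both the zero Dirichlet and zero Neumann conditions on $\partial\widetilde{\mathcal V}$.

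The key point in that first step is that, by construction in (A-6), every boundary vertex $z\in\partial\widetilde{\mathcal V}$ lies outside $\Omega_R$, and moreover $z$ and all its $\widetilde{\mathcal E}$-neighbours in $\widetilde{\mathcal V}^o$ lie outside the support ball — here one uses that $R$ was chosen large, so that a collar of width comparable to the diameter of the fundamental domain separates $\partial\widetilde{\mathcal V}$ from $\{|n|\le R\}$. Hence $\widehat{u}$ and all the neighbouring values entering the boundary operators vanish near $\partial\widetilde{\mathcal V}$, giving $\widehat{u}|_{\partial\widetilde{\mathcal V}}=0$ (zero Dirichlet) and $(\partial_\nu\widehat{u})(z)=0$ for every $z\in\partial\widetilde{\mathcal V}$ (zero Neumann) automatically. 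One must also verify that the equation $(\widehat{H}-\lambda)\widehat{u}=0$ on $\mathcal V_0$, when restricted to $v\in\widetilde{\mathcal V}^o$, coincides with $(-\widehat{\Delta}_{\widetilde{\Gamma}}+\widehat{V}-\lambda)\widehat{u}(v)=0$; this is immediate for interior vertices well inside $\widetilde{\mathcal V}^o$ since the Laplacian is local, and for interior vertices adjacent to $\partial\widetilde{\mathcal V}$ it holds because the missing terms in $\widehat{\Delta}_{\widetilde{\Gamma}}$ (compared with $\widehat{\Delta}_{\Gamma_0}$) are exactly values of $\widehat{u}$ at vertices outside $\widetilde{\mathcal V}$, which again lie outside the support ball and hence vanish. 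Applying Lemma \ref{S2_lem_ucp} then forces $\widehat{u}\equiv 0$ on $\widetilde{\mathcal V}^o$, in particular on $\Omega_R$.

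Finally, since $\widehat{u}$ vanishes on all of $\Omega_R\supset\{|n|\le R\}$ and also on $\{|n|>R\}$ by hypothesis, $\widehat{u}$ vanishes on the whole of $\mathcal V_0$, which is the assertion of UCP in the sense of Definition \ref{S2_def_ucp}. The main obstacle I anticipate is purely bookkeeping: one has to be careful that the ``large $R$'' in (A-6) is chosen so that the boundary vertices and their relevant neighbourhoods sit strictly outside the support ball of $\widehat{u}$, and that the induced edge set $\widetilde{\mathcal E}$ from (A-6) makes the restricted operator literally equal to the Neumann Schrödinger operator of Lemma \ref{S2_lem_ucp} rather than merely close to it. Once the geometry of the collar is pinned down, the argument is a direct citation of Lemma \ref{S2_lem_ucp}.
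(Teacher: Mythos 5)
Your proposal is correct and follows essentially the same route as the paper: the paper's proof likewise observes that, for $R$ chosen sufficiently large, the compactly supported solution automatically satisfies both the zero Dirichlet and zero Neumann conditions on $\partial\widetilde{\mathcal{V}}$, and then invokes Lemma \ref{S2_lem_ucp} to force $\widehat{u}=0$ on $\widetilde{\mathcal{V}}^o$. Your additional bookkeeping --- the collar separating $\partial\widetilde{\mathcal{V}}$ and its neighbours from the support ball, and the check that the restriction of $(\widehat{H}-\lambda)\widehat{u}=0$ literally coincides with the Neumann Schr\"{o}dinger equation on $\widetilde{\Gamma}$ --- is exactly the tacit content of the paper's phrase ``take a sufficiently large $R>0$''.
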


Proof.
Take a sufficiently large $R>0$.
Suppose that a function $\widehat{u}$ satisfies $ (\widehat{ H} -\lambda )\widehat{ u}=0$ on $\mathcal{V}_0$ and $\widehat{ u} (x)=0$ for $|x| > R $ for a constant $ \lambda $.
Then we can take a finite subgraph $\widetilde{\Gamma}$ satisfying (A-6).
In particular, it follows that $\widehat{ u}$ is a solution to $(-\widehat{ \Delta}_{\Gamma_0} +\widehat{ V}) \widehat{u} =0$ in $\widetilde{\mathcal{V}}^o$ with both zero Dirichlet and Neumann boundary conditions on $\partial \widetilde{\mathcal{V}}$.
Lemma \ref{S2_lem_ucp} implies $\widehat{u} =0$ in $\widetilde{\mathcal{V}}^o $ so that $\widehat{ u}$ identically vanishes on $\mathcal{V}_0$.
\qed

\medskip

We note that the assumption (A-6) can be generalized for perturbed lattices $\Gamma = \{ \mathcal{V},\mathcal{E} \} $ immediately.
Actually, it is sufficient to replace $\Omega_R$ by a sufficiently large finite subset $\Omega \subset \mathcal{V}$ and to suppose that there exists a finite subgraph $\widetilde{\Gamma}$ of $\Gamma$ satisfying the conditions of (A-6).
Then we obtain the nonexistence result of eigenvalues in the essential spectrum as follows.

\begin{theorem}
Under the assumption of Theorem \ref{Rellichtypetheorem} and (A-6), the operator $\widehat{ H}$ has no eigenvalues in $\sigma_{ess}(\widehat{ H}) \setminus \mathcal T_1$.
\end{theorem}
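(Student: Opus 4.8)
The plan is to combine the two main results of the paper: the Rellich-type uniqueness theorem (Theorem \ref{Rellichtypetheorem}) and the unique continuation property guaranteed by (A-6) (Theorem \ref{S2_lem_ucpA5}). Suppose $\lambda \in \sigma_{ess}(\widehat{H}) \setminus \mathcal{T}_1$ and let $\widehat{u} \in \ell^2(\mathcal{V})$ be an eigenfunction, $\widehat{H}\widehat{u} = \lambda \widehat{u}$, with $\widehat{u} \not\equiv 0$. Since $\widehat{u} \in \ell^2$, the normalized tail sums in \eqref{S3_eq_B*0} tend to zero; indeed $\frac{1}{R}\sum_{|n|<R}\sum_j |\widehat{u}_j(n)|^2 \leq \frac{1}{R}\|\widehat{u}\|^2_{\ell^2} \to 0$ as $R \to \infty$. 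Moreover the eigenfunction equation $\widehat{H}\widehat{u} = \lambda\widehat{u}$ holds on all of $\mathcal{V}_{ext}$, in particular for $|n| > R_0$ with any $R_0$. Hence the hypotheses of Theorem \ref{Rellichtypetheorem} are met (here one uses the remark following Theorem \ref{S2_lem_ucpA5} that (A-6) extends verbatim to the perturbed lattice $\Gamma$, but for the Rellich step only (A-1)--(A-4) are needed).

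Applying Theorem \ref{Rellichtypetheorem}, we obtain a constant $R_1 > R_0$ such that $\widehat{u}(n) = 0$ for $|n| > R_1$. Thus the eigenfunction is compactly supported. Now invoke the unique continuation property: by (A-6) (in its form for perturbed lattices, as noted in the paragraph before the statement), one chooses a finite subgraph $\widetilde{\Gamma}$ whose interior contains the finite support region together with enough surrounding vertices, and which satisfies the two-points condition and (A-5). Since $\widehat{u}$ vanishes identically outside this region, it satisfies both zero Dirichlet and zero Neumann boundary conditions on $\partial\widetilde{\mathcal{V}}$, and $(-\widehat{\Delta}_{\Gamma} + \widehat{V} - \lambda)\widehat{u} = 0$ on $\widetilde{\mathcal{V}}^o$. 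Lemma \ref{S2_lem_ucp} then forces $\widehat{u} \equiv 0$ on $\widetilde{\mathcal{V}}^o$, and hence on all of $\mathcal{V}$, contradicting $\widehat{u} \not\equiv 0$. Therefore $\widehat{H}$ has no eigenvalue in $\sigma_{ess}(\widehat{H}) \setminus \mathcal{T}_1$.

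I expect the proof itself to be essentially a two-line assembly once the ingredients are in place; there is no genuine obstacle, since both deep theorems have already been established. The one point requiring a little care is bookkeeping around the assumptions: Theorem \ref{Rellichtypetheorem} is stated for a function on $\mathcal{V}_{ext}$ satisfying the equation only for $|n| > R_0$, so one should check that an $\ell^2$-eigenfunction of $\widehat{H}$ on the full perturbed lattice $\mathcal{V} = \mathcal{V}_{int} \cup \mathcal{V}_{ext}$ restricts to such a function (it does: the equation holds everywhere, a fortiori on $\mathcal{V}_{ext}$ for large $|n|$, and the decay condition \eqref{S3_eq_B*0} is immediate from square-summability). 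The second point is that (A-6) as literally stated concerns the periodic lattice $\Gamma_0$, so one must appeal to the remark that it generalizes to $\Gamma$ by replacing $\Omega_R$ with a sufficiently large finite $\Omega \subset \mathcal{V}$ containing $\{|n| \leq R_1\}$. With these two remarks noted, the contradiction is immediate and the theorem follows.
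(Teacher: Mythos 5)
Your proposal is correct and follows exactly the argument the paper intends (the paper leaves this proof implicit, stating the theorem immediately after the remark that (A-6) extends to perturbed lattices): square-summability gives the decay condition (\ref{S3_eq_B*0}), Theorem \ref{Rellichtypetheorem} then forces compact support of the eigenfunction, and the unique continuation property from (A-6) via Lemma \ref{S2_lem_ucp} forces it to vanish identically. Your two bookkeeping remarks (restricting the eigenfunction to $\mathcal{V}_{ext}$ and using the perturbed-lattice version of (A-6)) are precisely the points that need checking, and you handle them correctly.
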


\subsection{Examples}
\subsubsection{Square shaped domain in the square lattice.}
Let $\widetilde{\mathcal{V}}^o = \Omega_R = \{ x\in {\bf Z}^d \ ; \ |x_j| \leq R  , \, j=1,\ldots,d\} $ for a large constant $R>0$ and $ \partial \widetilde{\mathcal{V}} = \{ x\in {\bf Z}^d \setminus \widetilde{\mathcal{V}}^o \ ; \ \exists y\in \widetilde{\mathcal{V}}^o \ \text{such that} \ x\sim y \} $.
Then $ (\widetilde{\mathcal{V}}^o \cup \partial \widetilde{\mathcal{V}} , \widetilde{\mathcal{E}} )$ is a square shaped domain in $\mathcal{V}_0 = {\bf Z}^d$.
In this case, we can prove UCP directly (see \cite{IsMo,IsMo2}).
Thus there is no eigenfunction of $\hat H$ on ${\bf Z}^d $ with a finite support.
Moreover, $ (\widetilde{\mathcal{V}}^o \cup \partial \widetilde{\mathcal{V}} , \widetilde{\mathcal{E}} )$ satisfies the two-points condition.
In fact, for any $S \subset \widetilde{\mathcal{V}}^o$ with $\sharp S \geq 2 $, every point $x\in S$ such that $x_1 = \max \{ y_1 \ ; \ y\in S \}$ or $\min \{ y_1 \ ; \ y\in S \} $ is an extreme point.  
 
\subsubsection{Parallelogram shaped domain in the hexagonal lattice}
\begin{figure}[t]
\begin{minipage}[t]{0.58\columnwidth}
\centering
\includegraphics[width=8cm, bb=0 0 1259 799]{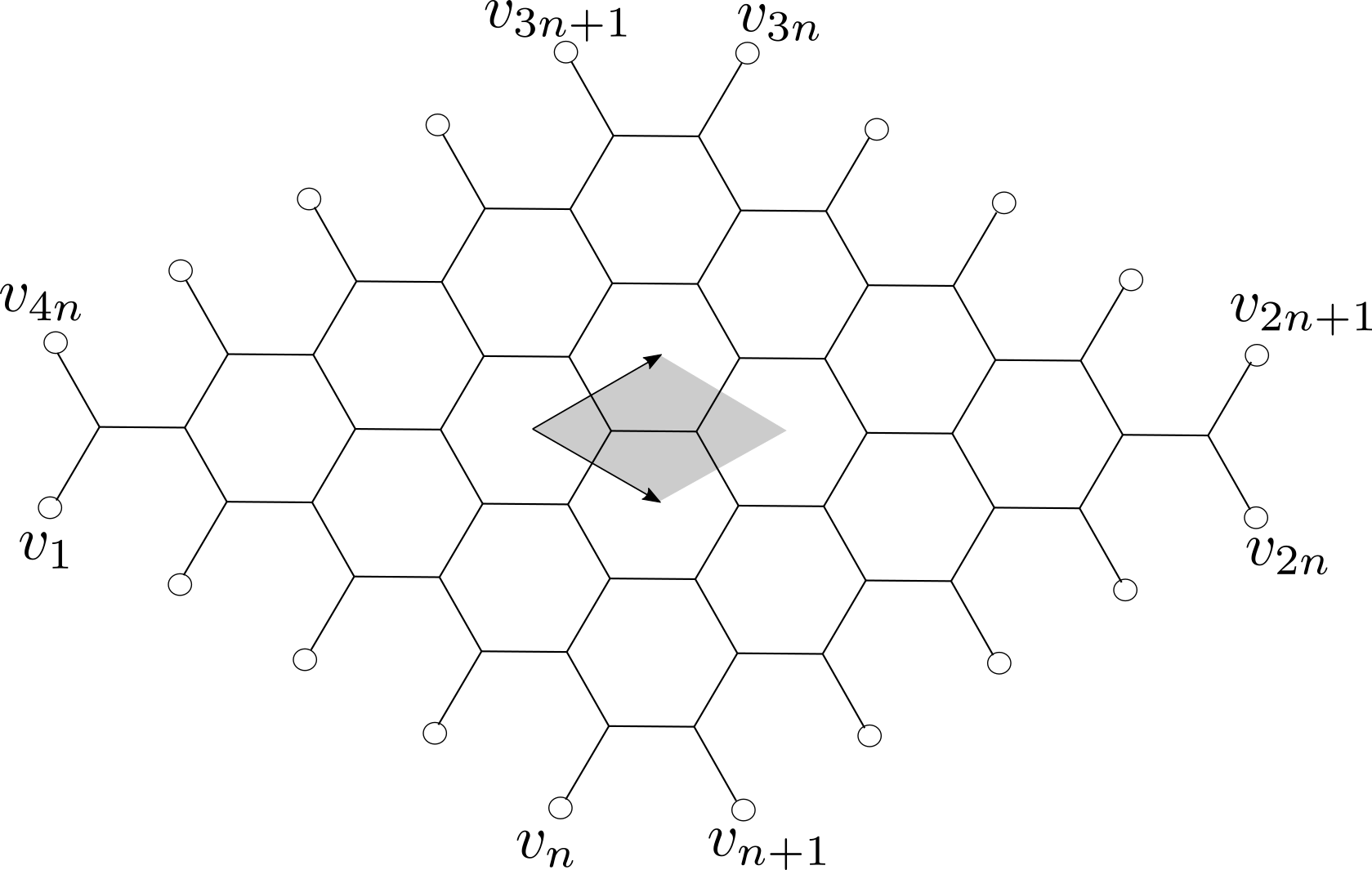}
%\caption{A subgraph $\{ \widetilde{\mathcal{V}}^o \cup \partial \widetilde{\mathcal{V}} , \widetilde{\mathcal{E}} \}$ of the hexagonal lattice. White vertices are the boundary vertices with the numbering $ \partial \widetilde{\mathcal{V}} = \{ v_1 , \ldots , v_{4n} \} $ for a positive integer $n$. 
%}
%\label{S2_fig_hexboundary}
\end{minipage}
\hspace*{0.02\columnwidth}
\begin{minipage}[t]{0.38\columnwidth}
\centering
\includegraphics[width=4cm, bb=0 0 589 926]{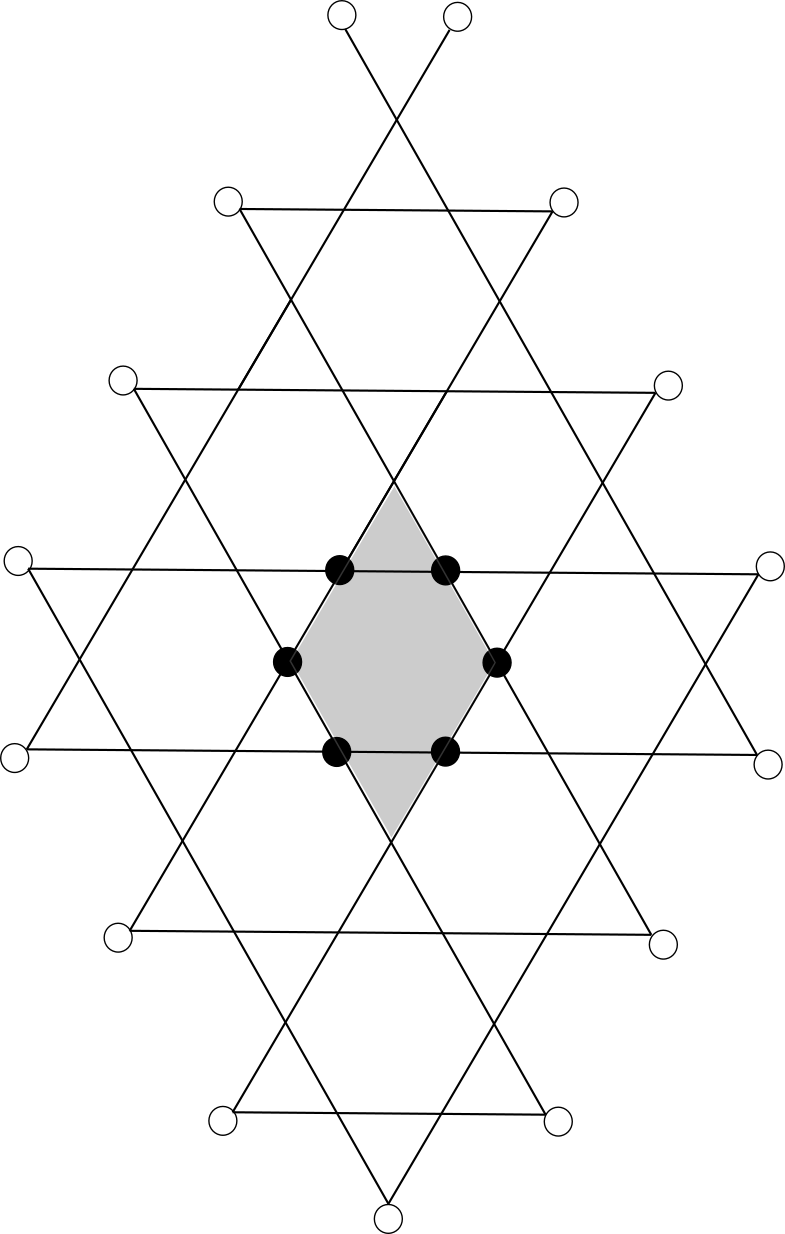}
%\caption{A subgraph $(\widetilde{\mathcal{V}} ^o \cup \partial \widetilde{\mathcal{V}} , \widetilde{\mathcal{E}} )$ of the kagome lattice. White vertices are the boundary vertices.
%This subgraph does not satisfy the two-points condition.
%In fact, the subset $S$ consisting of black vertices does not have extreme points.}
%\label{S2_fig_kagomeboundary}
\end{minipage}
\caption{Left : A subgraph $\{ \widetilde{\mathcal{V}}^o \cup \partial \widetilde{\mathcal{V}} , \widetilde{\mathcal{E}} \}$ of the hexagonal lattice. White vertices are the boundary vertices with the numbering $ \partial \widetilde{\mathcal{V}} = \{ v_1 , \ldots , v_{4n} \} $ for a positive integer $n$. / Right : A subgraph $(\widetilde{\mathcal{V}} ^o \cup \partial \widetilde{\mathcal{V}} , \widetilde{\mathcal{E}} )$ of the kagome lattice. White vertices are the boundary vertices.
This subgraph does not satisfy the two-points condition.
In fact, the subset $S$ consisting of black vertices does not have extreme points.}
\label{S2_fig_hexboundary_kagomeboundary}
\end{figure}
See Figure \ref{S2_fig_hexboundary_kagomeboundary}.
We consider the hexagonal lattice $\Gamma_0 = \left\{\mathcal{ V}_0 , \mathcal{E}_0 \right\}$.
Let $$ \widetilde{\mathcal{V}}^o =\Omega_R = \{ p_1 + {\bf v} (x), p_2 + {\bf v} (x) \in \mathcal{V}_0 \ ; \ \max (|x_1|,|x_2|)\leq R \} , $$ for a constant $R>0$ and $ \partial \widetilde{\mathcal{V}}= \{ v\in \mathcal{V}_0 \setminus \widetilde{\mathcal{V}} \ ; \ \exists w\in \widetilde{\mathcal{V}} \ \text{such that} \ v \sim w \} $.
In view of the numbering $ \partial \widetilde{\mathcal{V}} = \{ v_1 , \ldots , v_{4n} \}$ for a positive integer $n$, we take the shortest zigzag path $P_j$ from $v_j $ to $ v_{3n+1-j} $, or $Q_j$ from $v_{j+n} $ to $v_{4n+1-j} $ for $1\leq j \leq n $.
For any subset $S$ with $\sharp S=2$, evidently $S$ has two extreme points.
For any subset $S\subset \widetilde{\mathcal{V}}^o$ with $ \sharp S > 2$, we can take four zigzag lines $P_j$, $P_{j'}$, $Q_k$, and $Q_{k'} $ with $j' > j$ and $k' > k$ such that these zigzag lines enclose the subset $S$ and every zigzag line has at least one vertex in $S$.
If a vertex $v\in S $ is on $P _{j'}$, there exists a vertex $v_k \in \partial \widetilde{\mathcal{V}}$ with $k\in \{ n+1,\ldots,2n \}$ such that $v$ is the unique nearest vertex of $v_k$ in $S$.
For other zigzag lines $P_j$, $Q_k$ and $Q_{k'}$, the similar situation holds true.
Then $S$ has at least two extreme points, and $(\widetilde{\mathcal{V}}^o \cup \partial \widetilde{\mathcal{V}}, \widetilde{\mathcal{E}} )$ satisfies the two-points condition.

\subsubsection{Counter example (kagome lattice)}
\label{SubsubSCounterexample}
%\begin{figure}[h]
%\centering
%\includegraphics[width=4cm]{kagome_boundary.eps}
%\caption{A subgraph $(\widetilde{\mathcal{V}} ^o \cup \partial \widetilde{\mathcal{V}} , \widetilde{\mathcal{E}} )$ of the kagome lattice. White vertices are the boundary vertices.
%This subgraph does not satisfy the two-points condition.
%In fact, the subset $S$ consisting of black vertices does not have extreme points.}
%\label{S2_fig_kagomeboundary}
%\end{figure}
See Figure \ref{S2_fig_hexboundary_kagomeboundary}.
Here we consider the kagome lattice $\Gamma_0 = \left\{ \mathcal{V}_0 , \mathcal{E}_0 \right\}$.
Let $$ \widetilde{\mathcal{V}}^o =\Omega_R = \{ p_1 + {\bf v} (x), p_2 +{\bf v} (x),p_3 +{\bf v} (x) \in \mathcal{V}_0 \ ; \ \max (|x_1|,|x_2| )\leq R \} ,$$ for a constant $R>0$ and $ \partial \widetilde{\mathcal{V}} = \{ v\in \mathcal{V}_0 \setminus \widetilde{\mathcal{V}}^o \ ; \ \exists w\in \widetilde{\mathcal{V}}^o  \ \text{such that} \ v \sim w \} $.
As in Figure \ref{S2_fig_hexboundary_kagomeboundary}, we take a subset $S \subset \widetilde{\mathcal{V}}^o $ consisting of black vertices. 
$S$ does not have extreme points.
Then $(\widetilde{\mathcal{V}}^o \cup \partial \widetilde{\mathcal{V}} , \widetilde{\mathcal{E}} )$ does not satisfy the two-points condition.
In fact, as has been given in \cite[Section 5.3]{AIM}, we can construct a potential (cf. (\ref{S2_eq_Schrodinger})) for which there exists an eigenfunction having the subset $S$ as its support.
1

%%%%%%%%%%%%%%%%%%%%%%%%%%%%%%%%%%%%%
\appendix
\section{Connectivity of Fermi surfaces}

In this appendix, we show that the square lattice and the hexagonal lattice satisfy the assertion (2) in (A-4).

\subsection{Square lattice}
Let us recall some elementary properties of $\eta = \cos \zeta$ for $\zeta \in {\bf T}_{{\bf C}} $.
Letting
$$
\mathcal{D} _{\gamma} = \{ \zeta \in {\bf T} _{{\bf C}} \ ; \ |\mathrm{Im} \, \zeta |\leq \gamma \} , \quad \gamma \geq 0,
$$
and using 
$$
\mathrm{Re} \, \eta = \cos (\mathrm{Re} \, \zeta )\cosh (\mathrm{Im} \, \zeta), \quad \mathrm{Im} \, \eta = -\sin (\mathrm{Re} \, \zeta ) \sinh (\mathrm{Im} \, \zeta ),
$$
we see that the cosine function maps $ \mathcal{D}_{\gamma}$ to
\begin{gather*}
{\bf C} _{\gamma}^{\ast} := \cos (\mathcal{D}_{\gamma}) = \left\{ 
\begin{split}
\left\{ \eta \in {\bf C} \ ; \ \frac{(\mathrm{Re} \, \eta )^2}{(\cosh \gamma )^2} +\frac{(\mathrm{Im} \, \eta )^2}{(\sinh \gamma )^2} \leq 1 \right\} &, \quad \gamma >0, \\
[-1,1] &, \quad \gamma =0.
\end{split}
\right.
\end{gather*}
In fact, the case $\gamma > 0$ follows from the inequality
\begin{equation}
\begin{split}
\Big(\frac{{\rm Re}\, \eta}{\cosh \gamma}\Big)^2 + 
\Big(\frac{{\rm Im}\, \eta}{\sinh \gamma}\Big)^2 & = 
\cos^2x\Big(\frac{\cosh y}{\cosh \gamma}\Big)^2 +\sin^2x\Big(\frac{\sinh y}{\sinh \gamma}\Big)^2 \\
&\leq \cos^2 x + \sin^2 x = 1,
\end{split}
\nonumber
\end{equation}
where $\zeta = x + iy$ and $|y| \leq \gamma$. 
Therefore, ${\bf C} _{\gamma}^{\ast}$ is inside the ellipse for $\gamma >0$.

\begin{prop}
Let $\Gamma_0 = \{ \mathcal{V}_0 , \mathcal{E}_0 \} $ be the $d$-dimensional square lattice with $d\geq 2 $.
For any constant $a>0$, $M_{\lambda,reg}^{{\bf C}} $ with $\lambda \in \sigma (H_0) \setminus \{ -1,1\} $ satisfies the assertion (2) in the assumption (A-4).
\label{S3_prop_squarelattice}
\end{prop}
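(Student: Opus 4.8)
The plan is to check the two halves of assertion~(2) in (A-4) separately; the routine one is the submanifold statement, and the real work is showing that every connected component of $M_{\lambda,reg}^{\bf C}\cap\Omega_a$ meets the real torus ${\bf T}^d$. First I would make the geometry explicit. From $p(x,\lambda)=-\frac1d\sum_{j=1}^d\cos x_j-\lambda$ we get $\nabla_z p=\frac1d(\sin z_1,\dots,\sin z_d)$, so $M_{\lambda,sng}^{\bf C}$ consists exactly of the points all of whose coordinates lie in $\{0,\pi\}$ and which satisfy $p(z,\lambda)=0$; these form a finite subset of the \emph{real} torus, empty unless $\lambda\in\widetilde{\mathcal T}$, which is assertion~(1). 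Since $\lambda\in\sigma(H_0)\setminus\mathcal T_1=(-1,1)$, the intersection $M_{\lambda,reg}^{\bf C}\cap{\bf T}^d$ coincides with the set of regular points of the nonempty level set $\{x\in{\bf T}^d:\sum_j\cos x_j=-d\lambda\}$, hence by the analytic implicit function theorem is a $(d-1)$-dimensional real-analytic submanifold of ${\bf T}^d$; so the submanifold part of (2) is immediate.

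For the connectedness claim I would use the smooth function $\rho(z)=|\mathrm{Im}\,z|^2=\sum_j(\log|e^{iz_j}|)^2$ on ${\bf T}^d_{\bf C}$, which satisfies $\{\rho<a^2\}=\Omega_a$. Let $K$ be a connected component of $M_{\lambda,reg}^{\bf C}\cap\Omega_a$ and suppose $K\cap{\bf T}^d=\emptyset$, so $\rho>0$ on $K$. Then $\overline K\subset\overline{\Omega_a}$ is compact, $\rho|_{\overline K}$ attains its infimum $m_0$ at some $z^*\in\overline K$, and $m_0=\inf_K\rho<a^2$. The frontier $\overline K\setminus K$ lies in $\partial\Omega_a\cup M_{\lambda,sng}^{\bf C}$, and $z^*\notin\partial\Omega_a$ because $\rho(z^*)=m_0<a^2$. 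If $z^*\in K$, it would be a local minimum of $\rho|_{M_{\lambda,reg}^{\bf C}}$ with $\rho(z^*)>0$, which I claim cannot occur. Granting that, $z^*\in M_{\lambda,sng}^{\bf C}\subset{\bf T}^d$, so $m_0=0$ and $K$ accumulates at a singular point.

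The claim just used is that $\rho|_{M_{\lambda,reg}^{\bf C}}$ has no local minimum at a point with nonzero imaginary part, and this is the main obstacle. At a critical point $z^*$ of $\rho|_{M_{\lambda,reg}^{\bf C}}$ one has, since $p$ is holomorphic and $z^*$ is a regular point, $\partial_{z_j}\rho(z^*)=c\,\partial_{z_j}p(z^*)$ for all $j$ and one $c\in{\bf C}$; this reads $\mathrm{Im}\,z_j^*=\kappa\sin z_j^*$ for all $j$ with a single $\kappa\in{\bf C}$, and $\kappa\neq0$ once $\mathrm{Im}\,z^*\neq0$. Separating real and imaginary parts and using both $\mathrm{Im}\,z_j^*\in{\bf R}$ and $\mathrm{Im}\sum_j\cos z_j^*=0$ (from $p(z^*,\lambda)=0$) forces $\mathrm{Re}\,\kappa=0$, whence $\sin x_j^*=0$ and hence $z_j^*=x_j^*+iy_j^*$ with $x_j^*\in\{0,\pi\}$ for every $j$; moreover, writing $\kappa=iQ$ and $J=\{j:y_j^*\neq0\}$, the signs $\epsilon_j:=\cos z_j^*$ agree on $J$ with $\mathrm{sgn}\,Q=:\epsilon$, and $|y_j^*|$ equals a common value $b>0$ for $j\in J$. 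The configuration ``$\epsilon_j=\epsilon$ for all $j$'' would give $|{-d\lambda}|=d+|J|(\cosh b-1)>d$, contradicting $\lambda\in(-1,1)$; so there exists $k\notin J$ with $\epsilon_k=-\epsilon$. Now take the curve $\gamma$ in $M_\lambda^{\bf C}$ with $\gamma(0)=z^*$ and $\gamma'(0)=e_k$ (possible since $\partial_{z_k}p(z^*)=\frac1d\sin z_k^*=0$), corrected to second order so as to stay on $M_\lambda^{\bf C}$; using $\nabla_z p(z^*)=\frac{iQ}{d}\,\mathrm{Im}\,z^*$ and $\mathrm{Hess}_z p(z^*)=\frac1d\,\mathrm{diag}(\epsilon_j\cosh y_j^*)$ one gets $\rho(\gamma(t))=\rho(z^*)+t^2\,\epsilon_k/Q+O(t^3)$ with $\epsilon_k/Q<0$, so $z^*$ is not a local minimum.

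It remains to analyze $M_{\lambda,reg}^{\bf C}$ near a singular point $z^*$. With $\tilde z_j=z_j-z_j^*$ and $\sin z_j^*=0$ one has, identically, $p=\frac1d\sum_j\epsilon_j(1-\cos\tilde z_j)=\frac2d\sum_j\epsilon_j\sin^2(\tilde z_j/2)$, so in the holomorphic coordinates $w_j=\sin(\tilde z_j/2)$ — which restrict to a real-analytic diffeomorphism of a neighbourhood of $0$ in ${\bf R}^d$ — the set $M_\lambda^{\bf C}$ becomes the quadric cone $\{\sum_j\epsilon_j w_j^2=0\}$ of signature $(p_+,p_-)$ with $p_\pm\geq1$ because $\lambda\neq\pm1$. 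This cone with its vertex removed is connected for $d\geq3$ (an affine cone over a smooth quadric in ${\bf P}^{d-1}$) and has two components for $d=2$, and in either case every component contains a nonzero real point, e.g. $e_{j_0}\pm e_{k_0}$ with $\epsilon_{j_0}=-\epsilon_{k_0}$; by scale invariance each local branch of $M_{\lambda,reg}^{\bf C}$ at $z^*$ therefore contains real points arbitrarily close to $z^*$. Since $K$ accumulates at $z^*$, it meets one such branch $B$; as $z^*\in{\bf T}^d$, a small enough $B$ is connected and contained in $\Omega_a$, hence $B\subset K$, so $K\cap{\bf T}^d\supset B\cap{\bf T}^d\neq\emptyset$, contradicting the assumption. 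Thus every component of $M_{\lambda,reg}^{\bf C}\cap\Omega_a$ meets ${\bf T}^d$, which completes assertion~(2). The delicate step is the absence of positive local minima of $\rho$ on $M_{\lambda,reg}^{\bf C}$: this is precisely where the exclusion $\lambda\notin\{-1,1\}$ is used, and it relies on the explicit description of the critical points together with the explicit descent direction $e_k$.
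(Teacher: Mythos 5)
Your proof is correct, but it follows a genuinely different route from the paper's. The paper argues by explicit path construction: it sets $w_j=\cos z_j$, observes that $\cos$ maps the strip $\{|\mathrm{Im}\,\zeta|\le\gamma\}$ onto the closed elliptical region ${\bf C}_\gamma^{\ast}$, rewrites $p(z,\lambda)=0$ as the two linear balance conditions $\sum_j(\mathrm{Re}\,w_j+\lambda)=0$ and $\sum_j\mathrm{Im}\,w_j=0$, and then builds a concrete curve from $z^{(0)}$ to ${\bf T}^d$ inside $M_{\lambda,reg}^{\bf C}\cap\Omega_a$ by pairing a coordinate of smallest $|\mathrm{Im}\,w_j|$ with one of opposite sign and shrinking both to zero, stage by stage, and finally equalizing the real parts at $-\lambda$. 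You instead minimize $\rho(z)=|\mathrm{Im}\,z|^2$ over the closure of a component $K$, exclude a positive interior minimum by the Lagrange condition $\sin z_j^{\ast}=\kappa\,\mathrm{Im}\,z_j^{\ast}$ together with an explicit second-order descent direction $e_k$ (whose existence is exactly where $\lambda\neq\pm1$ enters), and treat the remaining case -- the infimum attained only at a frontier point, necessarily in $M_{\lambda,sng}^{\bf C}\subset{\bf T}^d$ -- via the local normal form $\sum_j\epsilon_j w_j^2=0$ in the coordinates $w_j=\sin(\tilde z_j/2)$, whose local branches all carry real points because the signature is mixed. I checked the two computational pivots (the vanishing of $\mathrm{Re}\,\kappa$ from $\sum\sin x_j^{\ast}\sinh y_j^{\ast}=0$, and the sign of the $t^2$ coefficient $\epsilon_k/Q=-b/\sinh b<0$ in $\rho(\gamma(t))$) and both are sound. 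Your variational argument is more systematic and makes explicit how components accumulating at threshold singular points ($\lambda\in\widetilde{\mathcal T}\setminus\mathcal T_1$) still reach ${\bf T}^d$; the paper's construction is more elementary and fully constructive, and its $\cos$-substitution is the same device it then reuses for the hexagonal lattice. Note also that you supply the submanifold half of assertion (2), which the paper's proof leaves implicit.
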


Proof.
For $\lambda \in (-1,1)$, the equation $p(z,\lambda )=0$ on ${\bf T}^d _{{\bf C}} $ is equivalent to 
\begin{equation}
(\cos z_1 +\lambda ) + \cdots + (\cos z_d +\lambda ) =0  .
\label{S3_eq_squarelattice001}
\end{equation} 
Letting $w_j = \cos z_j$, the equation (\ref{S3_eq_squarelattice001}) is rewritten as 
\begin{gather}
(\mathrm{Re} \, w_1 +\lambda ) + \cdots + (\mathrm{Re} \, w_d +\lambda )=0 , \label{S3_eq_squarelattice002} \\
\mathrm{Im} \, w_1 + \cdots + \mathrm{Im} \, w_d =0 . \label{S3_eq_squarelattice003}
\end{gather}
We take a point $z^{(0)} \in M_{\lambda,reg}^{{\bf C}} \cap \Omega_a $, and construct a continuous curve $c (t)=(c_1 (t),\ldots,c_d (t))$ in $M_{\lambda,reg}^{{\bf C}} \cap \Omega_a$ for $t\in [0,1]$ such that $c (0)=z^{(0)} $ and $c (1)\in M_{\lambda,reg}^{{\bf C}} \cap {\bf T}^d $. 

\smallskip
\noindent
(i) We first translate $z^{(0)}$ within $M_{\lambda,reg}^{{\bf C}} \cap \Omega_a$ to the real submanifold. Note that $M_{\lambda,sng}^{{\bf C}} \subset \{ z\in {\bf T}^d_{{\bf C}} \ ; \ z_j = 0 \ \text{modulo} \ \pi , \ j=1,\ldots ,d \} $. As $z^{(0)} \in \Omega_a$, 
there exist constants $\gamma_1 , \ldots,\gamma_d > 0$ such that $z_j^{(0)}\in \mathcal{D} _{\gamma_j} $ and $\gamma_1^2 +\cdots+\gamma_d^2 <a^2 $.
We put 
\begin{equation}
w_j^{(0)} = \cos z_j^{(0)} \in {\bf C} _{\gamma_j}^{\ast} , \quad j=1,\ldots,d.
\label{definewj(0)}
\end{equation}
We rearrange $w_j^{(0)}$'s so that
\begin{equation}
0\leq |\mathrm{Im} \, w_1^{(0)} |\leq |\mathrm{Im} \, w_2^{(0)} |\leq \cdots \leq |\mathrm{Im} \, w_d^{(0)} |.
\label{S2_eq_hermiconnectivity_Im01}
\end{equation}
In the following argument, for a curve $c_j(t)$ in ${\mathcal D}_{\gamma_j}$ for $\gamma _j > 0$, we put 
\begin{equation}
c_j^* (t)= \cos c_j (t) \in {\bf C} _{\gamma_j}^{\ast} .
\label{definecjast(t)}
\end{equation}
Without loss of generality, we assume that $\mathrm{Im} \, w_1^{(0)} \geq 0 $. 
Then, by (\ref{S3_eq_squarelattice003}) and (\ref{S2_eq_hermiconnectivity_Im01}), there exists $k\in \{ 2,\ldots ,d\} $ such that $\mathrm{Im} \, w_k^{(0)} \leq 0$ and $0 \leq \mathrm{Im} \, w_1^{(0)} \leq |\mathrm{Im} \, w_k^{(0)} |$. 
Let $t_j = j/5, j = 0, 1, \dots,5$.
By virtue of (\ref{definewj(0)}) and the fact that ${\bf C} _{\gamma_1}^{\ast}$ is inside the ellipse,
we can take a curve $c_1^{\ast}(t)$ in ${\bf C}_{\gamma_1}^{\ast}$ for $t\in [0,t_1]$ such that $c_1(0) = z_1^{(0)}$, $\mathrm{Im} \, c_1^* (t) \downarrow 0$ as $t\to t_1$ and $\mathrm{Re} \, c_1^* (t)=\mathrm{Re} \, w_1 ^{(0)} $ on $[0,t_1]$.
For $j\in \{ 2,\ldots,d\} \setminus \{ k \}$, we put $c_j^* (t)=w_j^{(0)}$ for $t\in [0,t_1] $, and construct $c_k^{\ast}(t)$ by using (\ref{S3_eq_squarelattice002}) and 
(\ref{S3_eq_squarelattice003}). The curve $c(t)$ on $[0,t_1]$ is then defined by (\ref{definecjast(t)}). Taking account of
$$
\mathrm{Im}\, c_1^{\ast}(t) + \mathrm{Im}\,c_k^{\ast}(t) = 
- \sum_{j\neq 1,k}\mathrm{Im}\, w_j^{(0)} = \mathrm{Im}\, w_1^{(0)} + \mathrm{Im}\, w_k^{(0)} \leq 0
$$
and the fact that $\mathrm{Im} \, c_1^{\ast}(t)$ is non-negative and monotone decreasing, we see that the curve $c_k^* (t)$ satisfies $\mathrm{Im} \, c_k^* (t)\leq 0$ and $|\mathrm{Im} \, c_k^* (t)|\leq |\mathrm{Im} \, w_k^{(0)} |$ on $[0,t_1]$. 
To the curve thus constrcuted, we apply the same procedure. Then, we can take a new curve $c(t)\in M_{\lambda,reg}^{{\bf C}} \cap \Omega_a$ on $[0,t_2]$ such that $\mathrm{Re} \, c_j^* (t_2)=\mathrm{Re} \, w_j^{(0)}$ and $\mathrm{Im} \, c_j^* (t_2 )=0$ for $j=1,\ldots,d$.

\smallskip
\noindent
(ii) We next  translate in the real manifold.
Without loss of generality, we assume that 
\begin{equation}
0\leq |\mathrm{Re} \, w_1^{(0)} +\lambda |\leq |\mathrm{Re} \, w_2^{(0)} +\lambda|\leq \cdots \leq |\mathrm{Re} \, w_d^{(0)} +\lambda|.
\label{S2_eq_hermiconnectivity01}
\end{equation}
If $\pm (\mathrm{Re} \, w_1 ^{(0)} +\lambda )\geq 0$, there exists $k\in \{ 2,\ldots,d \}$ such that $\pm (\mathrm{Re} \, w_k^{(0)} +\lambda )\leq 0$ and $|\mathrm{Re} \, w_1^{(0)}+\lambda|\leq | \mathrm{Re} \, w_k^{(0)}+\lambda|$ by (\ref{S2_eq_hermiconnectivity01}) and the equation (\ref{S3_eq_squarelattice002}). 
We take a curve $c_1 (t)$ such that $\pm (\mathrm{Re} \, c_1^* (t)+\lambda ) \downarrow 0 $ as $t\to t_3$ and $\mathrm{Im} \, c_1^* (t)=0$ for $t\in [t_2 ,t_3 ]$.
For $j\in \{ 2,\ldots,d\} \setminus \{ k \}$, we put $c_j^* (t)=c_j^* (t_2)$ for $t\in [t_2,t_3] $.
By the same argument as in (i), it follows that the curve $c_k^* (t)$ for $t\in [t_2,t_3]$ satisfies $\pm (\mathrm{Re} \, c_k^* (t_3)+\lambda )\leq 0$ and $|\mathrm{Re} \, c_k^* (t)+\lambda |\leq |\mathrm{Re} \, w_k^{(0)} +\lambda |$.
Note that $c_j^* (t) \in {\bf C}_{\gamma_j}^{\ast} $, hence $c_j (t)\in \mathcal{D}_{\gamma_j}$ for $t\in [t_2 ,t_3  ]$, $j=1,\ldots,d$.
Therefore, we obtain $c(t)\in M_{\lambda,reg}^{{\bf C}} \cap \Omega_a$ on $ [t_2 , t_3 ]$.

Repeating similar procedures, we can take a curve $c(t)\in M_{\lambda,reg}^{{\bf C}} \cap \Omega_a$ for $t\in [t_3 , t_4 ] $ satisfying
\begin{gather}
\mathrm{Im} \, c_j ^* (t )=0 , \quad t\in [t_3,t_4 ], \quad j=1,\ldots,d, \label{S2_eq_hermiconnectivity03}\\
\mathrm{Re} \, c_1^* (t_4 )+\lambda = \cdots = \mathrm{Re} \, c_{d-2}^* (t_4 )+\lambda =0, \label{S2_eq_hermiconnectivity02} \\
|\mathrm{Re} \, c_{d-1}^* (t_4)+\lambda |\leq |\mathrm{Re} \, c_d^* (t_4)+\lambda |. 
\label{S2_eq_hermiconnectivity03}
\end{gather}
Due to the equations (\ref{S3_eq_squarelattice002}) and (\ref{S2_eq_hermiconnectivity02}), if $\pm (\mathrm{Re} \, c_{d-1}^* (t_4)+\lambda )>0$, we have $\pm (\mathrm{Re} \, c_d^* (t_4)+\lambda )<0$.
We take a curve $c_{d-1}^* (t)$ for $t\in [t_4 ,1]$ such that $\pm (\mathrm{Re} \, c_{d-1}^* (t)+\lambda ) \downarrow 0$ as $t\to t_5$ for $t\in [t_4 , 1]$ in the same way for $c_1^* (t)$ with $t\in [t_2 , t_3 ]$.
For $j\in \{ 1,\ldots,d-2 \}$, we put $c_j^* (t)= c_j^* (t_4)=-\lambda $ for $t\in [t_4 ,1 ]$.
Due to the equation (\ref{S3_eq_squarelattice002}) and (\ref{S2_eq_hermiconnectivity02}), we see $\mathrm{Re} \, c_d^* (t)+\lambda \to 0$ as $t\to 1$.
By the construction, $c (t)\in M_{\lambda,reg}^{{\bf C}} \cap \Omega_a$ on $[0,1 ]$.

\smallskip
\noindent
(iii) Finally, we note that $\mathrm{Re} \, c_1^* (1)=\cdots=\mathrm{Re} \, c_d ^* (1)=-\lambda \in (-1,1)$ and $\mathrm{Im} \, c_1^* (1)=\cdots =\mathrm{Im} \, c_d^* (1)=0$.
We have thus constructed a continuous curve $c(t)$ in $M_{\lambda,reg}^{{\bf C}} \cap \Omega_a$ for $t\in [0,1]$ such that $c(0)=z^{(0)}$ and $c(1)\in M_{\lambda,reg}^{{\bf C}} \cap {\bf T}^d$.
\qed

\subsection{Hexagonal lattice}

We consider the hexagonal lattice $\Gamma_0 = \{ \mathcal{V}_0 , \mathcal{E}_0 \} $.
Let $\lambda \in (-1,1)\setminus \{ 0 \}$.
In this case, the equation $p(z,\lambda )=0$ on ${\bf T}^2_{{\bf C}} $ is equivalent to 
$$
\cos z_1 + \cos z_2 + \cos (z_1 -z_2 )= \rho , \quad \rho = \frac{3}{2} (3\lambda ^2 -1)\in (-3/2 ,3) .
$$
Letting $\zeta_1 = (z_1 +z_2 )/2$ and $\zeta_2 = (z_1 -z_2 )/2$, the above equation is equivalent to
\begin{equation}
\cos \zeta_2 (\cos \zeta_1 +\cos \zeta_2 )= \mu , \quad \mu = \frac{1}{2} (\rho +1)\in (-1/4,2) .
\label{Ap_eq_hexagonal_cha}
\end{equation}
Let $w_j = \cos z_j$ and $\eta_j = \cos \zeta_j$ for $j=1,2$.
For $z\in \Omega_a$,
in view of $|\mathrm{Im} \, z_1 |^2 + |\mathrm{Im} \, z_2 |^2 = 2(|\mathrm{Im} \, \zeta_1 |^2 +|\mathrm{Im} \, \zeta_2 |^2 )$, we have $\zeta \in \Omega_{a/\sqrt{2}} $.
Then we see that $w_j \in {\bf C} _{\gamma_j}^{\ast}$, $\eta_j \in {\bf C} _{\gamma'_j}^{\ast}$, where $\gamma_1^2 + \gamma_2^2 <a^2$, $(\gamma'_1)^2 +(\gamma'_2 )^2 <a^2 /2$.

\begin{prop}
Let $\Gamma_0 = \{ \mathcal{V}_0 , \mathcal{E}_0 \} $ be the hexagonal lattice.
For any constant $a>0$, $M_{\lambda,reg}^{{\bf C}} $ with $\lambda \in \sigma (H_0) \setminus \{ -1,0,1\} $ satisfies the assertion (2) in the assumption (A-4).
\label{S3_prop_squarelattice}
\end{prop}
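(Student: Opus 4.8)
The plan is to run, in the coordinates $\zeta_1=(z_1+z_2)/2$, $\zeta_2=(z_1-z_2)/2$ set up above, the same curve-construction strategy as in the square-lattice case. As there, it is enough to establish two points. First, that every $z^{(0)}\in M_{\lambda,reg}^{{\bf C}}\cap\Omega_a$ can be joined by a continuous curve $c(t)$, $t\in[0,1]$, lying in $M_{\lambda,reg}^{{\bf C}}\cap\Omega_a$, to a point $c(1)\in M_{\lambda,reg}^{{\bf C}}\cap{\bf T}^2$; this forces each connected component of $M_{\lambda,reg}^{{\bf C}}\cap\Omega_a$ to intersect ${\bf T}^2$. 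Second, that at such a real point the real-analytic function $p(\,\cdot\,,\lambda)$ has non-vanishing gradient, so that the implicit function theorem exhibits $M_\lambda$ locally as a one-dimensional real-analytic submanifold of ${\bf T}^2$; but this is immediate from the definition of $M_{\lambda,reg}^{{\bf C}}$, so all the work is in the curve construction.

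For the curve, I would use that $p=0$ reads $\eta_2(\eta_1+\eta_2)=\mu$ with $\eta_j=\cos\zeta_j$ and $\mu\in(-1/4,2)$, that $z\in\Omega_a$ forces $\eta_j\in{\bf C}^\ast_{\gamma'_j}$ with $(\gamma'_1)^2+(\gamma'_2)^2<a^2/2$, and the geometric fact, already used for the square lattice, that for $\gamma>0$ the region ${\bf C}^\ast_\gamma$ is a genuinely two-dimensional elliptical disc containing a neighbourhood of the segment $[-\cosh\gamma,\cosh\gamma]$, which leaves room to slide real parts around while the constraint is maintained. Starting from $z^{(0)}$, I would first deform $\eta_2$ along a path in ${\bf C}^\ast_{\gamma'_2}$ keeping ${\rm Re}\,\eta_2$ fixed and letting ${\rm Im}\,\eta_2$ decrease monotonically to $0$, with the remaining coordinate following $\eta_1=\mu/\eta_2-\eta_2$; once $\zeta_2$ has been made real, I would deform $\eta_1$ down to the real axis in the same fashion, now solving the quadratic $\eta_2^2+\eta_1\eta_2-\mu=0$ for $\eta_2$ along a continuously chosen branch, and finally adjust the real parts so as to land on $M_\lambda^{{\bf C}}\cap{\bf T}^2$. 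Since $M_{\lambda,sng}^{{\bf C}}$ is discrete (this is assertion (1) of (A-4), proved in \cite{AIM}), a small generic perturbation keeps the curve inside $M_{\lambda,reg}^{{\bf C}}$ throughout and makes the endpoint a regular point, exactly as in the square-lattice argument.

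The step I expect to be the main obstacle is controlling $\eta_1=\mu/\eta_2-\eta_2$ during the first deformation: unlike the additively separable equation of the square lattice, here the two coordinates are coupled multiplicatively, so one has to give a quantitative estimate guaranteeing that $|{\rm Im}(\mu/\eta_2-\eta_2)|$ stays within the allowance for $\gamma'_1$ (after, if necessary, a harmless enlargement of $\gamma'_1$ within the total budget coming from $a$), so that $z$ remains in $\Omega_a$. This requires keeping $\eta_2$ bounded away from $0$, which is possible by routing the path in ${\bf C}^\ast_{\gamma'_2}$ around the origin precisely when $\mu\neq0$. The remaining values $\lambda=\pm1/3$, where $\mu=0$ and the variety $\{\eta_2(\eta_1+\eta_2)=0\}$ splits into the two branches $\{\cos\zeta_2=0\}$ and $\{\cos\zeta_1+\cos\zeta_2=0\}$, I would treat separately and branchwise: on $\{\cos\zeta_2=0\}$ the coordinate $\zeta_2$ is already real while $\zeta_1$ is unconstrained, so one simply slides ${\rm Im}\,\zeta_1$ to $0$; on $\{\cos\zeta_1+\cos\zeta_2=0\}$ the additive structure is restored and the square-lattice construction applies essentially verbatim. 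The bookkeeping needed to keep the curve off the finitely many singular points and to arrange a regular endpoint is routine, as in the square-lattice proof.
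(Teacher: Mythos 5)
Your framework is the right one --- the change of variables $\zeta_1=(z_1+z_2)/2$, $\zeta_2=(z_1-z_2)/2$, the reduction to $\eta_2(\eta_1+\eta_2)=\mu$ with $\eta_j=\cos\zeta_j$, and the case split according to the sign of $\mu$ all match the paper --- but the proof has a genuine gap exactly at the point you yourself flag as ``the main obstacle.'' Your proposed first deformation, moving $\eta_2$ along a vertical segment (fixed $\mathrm{Re}\,\eta_2$, with $\mathrm{Im}\,\eta_2\downarrow 0$), does not come with any control on $\eta_1=\mu/\eta_2-\eta_2$: along such a segment $|\eta_2(t)|$ decreases, so $|\mu/\eta_2(t)|$ increases, and $\eta_1(t)$ can leave ${\bf C}^{\ast}_{\gamma'_1}$ (indeed blow up, when $\mathrm{Re}\,\eta_2^{(0)}$ is small), which destroys the requirement that the curve stay in $\Omega_a$. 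Your suggested fix --- ``routing the path around the origin'' --- is incompatible with keeping $\mathrm{Re}\,\eta_2$ fixed, and no quantitative bound is supplied; enlarging $\gamma'_1$ ``within the budget'' cannot absorb an unbounded $\eta_1$. The paper's resolution is precisely to choose a different path: write $\eta_2=re^{i\theta}$ and deform $\theta$ with $|\sin\theta(t)|\downarrow 0$ while keeping the modulus $r$ \emph{fixed}. Then $\eta_1=\mu/\eta_2-\eta_2$ traces the explicit ellipse
\begin{equation*}
\frac{(\mathrm{Re}\,\eta_1)^2}{(\mu r^{-1}-r)^2}+\frac{(\mathrm{Im}\,\eta_1)^2}{(\mu r^{-1}+r)^2}=1 ,
\end{equation*}
whose semi-axes are controlled by the starting data (one checks $(\mu r^{-1}-r)^2\leq(\cosh\gamma'_1)^2$ because the initial point lies in ${\bf C}^{\ast}_{\gamma'_1}$, and the monotone decrease of $|\sin\theta|$ then keeps the whole arc inside ${\bf C}^{\ast}_{\gamma'_1}$). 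This constant-modulus trick is the missing estimate, and it is not a routine detail: it is the one place where the hexagonal proof genuinely differs from the additive square-lattice argument.

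Two smaller points. First, your ``second phase'' is redundant as stated: once $\eta_2$ is real and nonzero, the relation $\eta_2\,\mathrm{Im}\,\eta_1=\mathrm{Im}\,\mu=0$ forces $\eta_1$ to be real automatically, so there is nothing to deform there; the remaining work is rather to move the (now real) pair $(\eta_1,\eta_2)$ into $(-1,1)^2$ so that $\zeta$ itself becomes real, which the paper does by varying $r(t)$ on a second time interval while staying on the curve $\eta_1=\mu r^{-1}-r$. Second, for $\mu\in(-1/4,0)$ one must additionally verify that the $\eta_1$-ellipse actually meets ${\bf C}^{\ast}_{\gamma'_1}$ (the foci are now real, $\pm2\sqrt{-\mu}$), which the paper handles by a short contradiction argument; your sketch does not address this case separately beyond the sign of $\mu$. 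Your treatment of $\mu=0$ (the two branches $\cos\zeta_2=0$ and $\cos\zeta_1+\cos\zeta_2=0$) agrees with the paper.
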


Proof. 
Take a point $z\in M_{\lambda,reg}^{{\bf C}} \cap \Omega_a$, and let $\zeta$, $\eta$ be as above. Then,  we have by (\ref{Ap_eq_hexagonal_cha}), $\mu \in (-1/4,2)$ and
\begin{equation}
\eta_2 (\eta_1 +\eta_2 )=\mu .
\label{Ap_eq_hexisospectraleq}
\end{equation}
We consider the cases $\mu =0 $, $\mu \in (0,2)$ and $\mu \in (-1/4,0 ) $, separately.

\smallskip
\noindent
(i) If $\mu =0$, then $\eta_2 =0 $ or $\eta_1 + \eta_2 =0 $, and the lemma is obvious.

\smallskip
\noindent
(ii) We consider the case that $\mu \in (0,2)$.
The equation (\ref{Ap_eq_hexisospectraleq}) implies $\eta_2 \neq 0$, and
\begin{equation}
\eta_1 = \frac{\mu}{\eta_2} -\eta_2 .
\label{Ap_eq_hexisospectraleq02}
\end{equation}
Representing $\eta_2 = re^{i\theta}$, with $r>0$ and $\theta \in [0,2\pi )$, we have from (\ref{Ap_eq_hexisospectraleq02}) that 
\begin{gather}
\eta_1 = \left\{
\begin{split}
\left( \frac{\mu}{r} -r  \right) \cos (-\theta) +i \left( \frac{\mu}{r} +r  \right) \sin (-\theta ) &, \quad r\not= \sqrt{\mu} , \\
2i \sqrt{\mu} \sin (-\theta ) &, \quad r=\sqrt{\mu} .
\end{split}
\right.
\label{Ap_eq_hexisospectraleq03}
\end{gather}
For a fixed $r\not= \sqrt{\mu}$, 
$\eta_1$ given by (\ref{Ap_eq_hexisospectraleq03})  is on the ellipse
\begin{equation}
\frac{(\mathrm{Re} \, \eta_1 )^2 }{(\mu r^{-1} -r)^2 } + \frac{(\mathrm{Im} \, \eta_1 )^2 }{(\mu r^{-1} +r)^2 } =1
\label{Ap_eq_hexisospectraleq04}
\end{equation}
with  focuse $\pm 2i\sqrt{\mu} $. 
Here, note that
$$
4\mu \leq \left( \frac{\mu }{r} + r \right)^2 = \left( \frac{\mu}{r} -r \right)^2 +4\mu <\left( \frac{\mu}{r} -r \right)^2 +8.
$$
We take $ \theta (t) \in C([0,1/2 ];{\bf R})$ such that $\theta (0)=\theta$, $|\sin \theta (t)| \downarrow 0$  as $t\to 1/2$, and $c_2^* (t):=  re^{i \theta (t)} \in {\bf C} _{\gamma'_2 }^{\ast} $.
As $\eta _1$ and $ \eta_2$ are inside the ellipses ${\bf C}^*_{\gamma'_1} $ and ${\bf C}^* _{\gamma'_2}$ respectively, we have $(\mu r^{-1} -r )^2 \leq (\cosh \gamma'_1 )^2$.
We take $c_1^* (t)\in {\bf C} _{\gamma'_1}^{\ast} $ for $t \in [0,1/2] $, where
\begin{gather*}
c_1^* (t)= \left\{
\begin{split}
\left( \frac{\mu}{r} -r  \right) \cos (-\theta (t)) +i \left( \frac{\mu}{r} +r  \right) \sin (-\theta (t) ) &, \quad r\not= \sqrt{\mu} , \\
2i \sqrt{\mu} \sin (-\theta (t) ) &, \quad r=\sqrt{\mu} .
\end{split}
\right.
\end{gather*}
Note that $c_2^* (1/2) = \pm r$ and $c_1^* (1/2)= \mu (\pm r)^{-1} -(\pm r)$, respectively.
Take $r(t) \in C([1/2,1];{\bf R})$ such that $r(1/2)=\pm r$, $r(t)\not= 0$, $|r(t)|\leq \cosh \gamma'_1 $, and 
$$
r(1)\in ((-1+\sqrt{1+4\mu})/2 ,(1+\sqrt{1+4\mu})/2)\cap (-1,1).
$$
Letting $c_1^* (t)= \mu r(t)^{-1 } -r(t)$ and $c_2^*(t) = r(t)$ for $t\in [1/2 ,1]$, we see $c_1^* (1),c_2 ^* (1)\in (-1,1) $. 
Taking the corresponding continuous curve $c(t) \in M_{\lambda,reg}^{{\bf C}} \cap \Omega_a$ for $t\in [0,1]$ with 
$$
c_1^* (t)= \cos \frac{c_1 (t)+c_2(t)}{2}, \quad c_2^* (t)= \cos \frac{c_1 (t)-c_2 (t)}{2},
$$
we obtain the lemma for $\mu \in (0,2)$.

\smallskip
\noindent
(iii) We consider the case of $\mu \in (-1/4,0 )$.
We again put $\eta_2 = re^{i\theta} $ and take a similar curve $c_2^* (t)= r\sin \theta (t)$ for $t\in [0,1/2]$.
It follows from (\ref{Ap_eq_hexisospectraleq02}) that
\begin{gather}
\eta_1 = \left\{
\begin{split}
\left( \frac{\mu}{r} -r  \right) \cos (-\theta) +i \left( \frac{\mu}{r} +r  \right) \sin (-\theta ) &, \quad r\not= \sqrt{-\mu} , \\
-2 \sqrt{-\mu} \cos (-\theta ) &, \quad r=\sqrt{-\mu} ,
\end{split}
\right.
\label{Ap_eq_hexisospectraleq05}
\end{gather}
with 
$$
\left( \frac{\mu }{r} - r \right)^2 = \left( \frac{\mu }{r} + r \right)^2 +4(-\mu )<\left( \frac{\mu }{r} + r \right)^2 +1.
$$
Then $\eta_1$ given by (\ref{Ap_eq_hexisospectraleq05}) for $r\not= \sqrt{-\mu}$ is on the ellipse (\ref{Ap_eq_hexisospectraleq04}) with  focuse $\pm 2\sqrt{-\mu}$.
Here, let us remark that $(\mu r^{-1} -r)^2 \leq (\cosh \gamma'_1 )^2 $. In fact, 
if $(\mu r^{-1} -r)^2 > (\cosh \gamma'_1 )^2 $, we have $(\mu r^{-1} +r )^2 > (\sinh \gamma'_1 )^2 $.
Then, contrary to our assumption, the ellipse (\ref{Ap_eq_hexisospectraleq04}) lies outside ${\bf C} _{\gamma'_1} ^*$.
As $z\in M_{\lambda,reg}^{{\bf C}} \cap \Omega_a$, the ellipse (\ref{Ap_eq_hexisospectraleq04}) has an intersection with ${\bf C} _{\gamma'_1} ^*$.
We thus obtain $c_1^* (t)\in {\bf C} _{\gamma'_1}^*$ for $t\in [0,1/2]$ where 
\begin{gather*}
c_1^* (t)= \left\{
\begin{split}
\left( \frac{\mu}{r} -r  \right) \cos (-\theta (t)) +i \left( \frac{\mu}{r} +r  \right) \sin (-\theta (t) ) &, \quad r\not= \sqrt{-\mu} , \\
-2 \sqrt{-\mu} \cos (-\theta (t) ) &, \quad r=\sqrt{-\mu} .
\end{split}
\right.
\end{gather*}
The remaining part of the proof is parallel to the case of $\mu \in (0,2)$.
\qed

%%%%%%%%%%%%%%%%%%%%%%%%%%%%%%%%%%%%%

\medskip

{\bf Acknowledgements.}
K. Ando is supported by JSPS grant-in-aid for Scientific Research (C) No. 20K03655. 
H. Isozaki is supported by JSPS grant-in-aid for Scientific Research (C) No. 24K06768. 
H. Morioka is supported by JSPS grant-in-aid for scientific research (C) No. 24K06761. 
They are indebted for the supports.
The authors greatly appreciate the helpful comments by the reviewer for the first version of this manuscript.
The description of this paper has been significantly improved by these comments.


\begin{thebibliography}{99}
\bibitem{Ag}
Agmon, S., \textit{Lower bounds for solutions of Schr\"{o}dinger equations}, J. d'Anal. Math., {\bf 23} (1970), 1-25.

\bibitem{AgHo}
Agmon, S. and H\"{o}rmander, L., \textit{Asymptotic properties of solutions of differential equations with simple characteristics}, J. d'Anal. Math., {\bf 30} (1976), 1-38.


\bibitem{AIM}
Ando, K., Isozaki, H. and Morioka, H., \textit{Spectral properties of Schr\"{o}dinger operators on perturbed lattices}, Ann. Henri Poincar\'{e}, {\bf 17} (2016), 2103-2171.

\bibitem{AIM2}
Ando, K., Isozaki, H. and Morioka, H., \textit{Inverse scattering for Schr\"{o}dinger operators on perturbed lattices}, Ann. Henri Poincar\'{e}, {\bf 19} (2018), 3397-3455. 

\bibitem{BILL1}
Bl\aa sten, E., Isozaki, H., Lassas, M. and Lu, J., \textit{The Gel'fand's inverse problem for graph Laplacian}, J. Spectr. Theory, \textbf{13} (2023), 1-45.

\bibitem{BILL2}
Bl\aa sten, E., Isozaki, H., Lassas, M. and Lu, J., \textit{Inverse problems for discrete heat equations and random walks}, SIAM Discrete Math., \textbf{37} (2023), 831-863.

\bibitem{Ei}
Eidus, D. M., \textit{The principle of limiting absorption}, Amer. Math. Soc. Transl. Set. 2, {\bf 47} (1962), 157-191.

\bibitem{Ho}
H\"{o}rmander, L., \textit{Lower bounds at infinity for solutions of differential equations with constant coeffcients}, Israel J. Math., {\bf 16} (1973), 103-116.

\bibitem{Iso20}
Isozaki, H., \textit{Inverse Spectral and Scattering Theory - An Introduction}, Springer Briefs in Mathematical Physics \textbf{38}, Springer Nature, Singapore (2020).

\bibitem{IsMo}
Isozaki, H. and Morioka, H., \textit{A Rellich type theorem for discrete Schr\"{o}dinger operators}, Inverse Probl. Imaging, {\bf 8} (2014), 475-489.

\bibitem{IsMo2}
Isozaki, H. and Morioka, H., \textit{Inverse scattering at a fixed energy for discrete Schr\"{o}dinger operators on the square lattice}, Ann. Inst. Fourier, {\bf 65} (2015), 1153-1200.

\bibitem{Ka}
Kato, T., \textit{Growth properties of solutions of the reduced wave equation with a variable coefficient}, CPAM, {\bf 12} (1959), 403-425.

\bibitem{Kr}
Krantz, S. G., ``Function Theory of Several Complex Variables", 2nd edn., AMS, Providence, RI (1992).

\bibitem{Lit1}
Littman, W., \textit{Decay at infinity of solutions to partial differential equations with constant coefficients}, Trans. Amer. Math. Soc., \textbf{123} (1966), 449-459.

\bibitem{Lit2}
Littman, W., \textit{Decay at infinity of solutions to partial differential equations; removal of the curvature assumption}, Israel J. Math., \textbf{8} (1970), 403-407.

\bibitem{Li}
Liu, W., \textit{Irreducibility of the Fermi variety for discrete periodic Schr\"{o}dinger operators and embedded eigenvalues}, Geom. Funct. Anal., {\bf 32} (2022), 1-30.

\bibitem{Li2}
Liu, W., \textit{Fermi isospectrality for discrete periodic Schr\"{o}dinger operators}, Commun. Pure Appl. Math., {\bf 77} (2024), 1126-1146.

\bibitem{Mur}
Murata, M., \textit{Asymptotic behaviors at infinity of solutions to  certain partial differential equations}, J. Fac. Sci. Univ. Tokyo Sec. IA,\textbf{23} (1976), 107-148.

\bibitem{RaTa}
Ramm, A. G. and Taylor, B. A., \textit{A new proof of absence of positive discrete spectrum of the Schr\"{o}dinger operator}, J. Math. Phys., {\bf 21} (1980), 2395-2397.

\bibitem{Re}
Rellich, F., \textit{{\"U}ber das asymptotische Verhalten der L{\"o}sungen von $\Delta u + \lambda u = 0$ in unendlichen Gebieten}, Jahresber. Deitch. Math. Verein., \textbf{53} (1943), 57-65.

\bibitem{Ro}
Roze, S. N., \textit{The spectrum of a second-order elliptic operator}, Math. Sb., {\bf 80} (122) (1969), 195-209.





\bibitem{ShVa}
Shaban, W. and Vainberg, B., \textit{Radiation conditions for the difference Schr\"{o}dinger operators}, Applicable Analysis, {\bf 80} (2001), 525-556.

\bibitem{Tre} 
Treves, F., \textit{Differential polynomials and decay at infinity}, Bull. Amer. Math. Soc., \textbf{66} (1960), 184-186.

\bibitem{Vek}
Vekua, E., \textit{On metaharmonic functions}, Trudy Tbiliss. Mat. Inst., {\bf 12} (1943), 105-174 (in Russian, Georgian, and English summary).

\bibitem{Ves}
Vesalainen, E. V., \textit{Rellich type theorems for unbounded domains}, Inverse Probl. Imaging, {\bf 8} (2014), 865-883.

\end{thebibliography}
\end{document}